\renewcommand\theequation{\thesection.\arabic{equation}}
\newcommand{\Mscr}{\mathscr{M}}
\newcommand{\ifm}{\mathrm{if}}
\newcommand{\PGL}{\mathrm{PGL}}
\newcommand{\GL}{\mathrm{GL}}
\newcommand{\Trm}{\mathrm{T}}
\newcommand{\omegabar}{\overline{\omega}}
\newcommand{\Cscr}{\mathscr{C}}
\newcommand{\Bscr}{\mathscr{B}}
\newcommand{\Krm}{\mathrm{\mathbf{K}}}
\newcommand{\Erm}{\mathrm{E}}
\newcommand{\F}{\mathrm{\mathbf{F}}}
\newcommand{\crm}{\mathrm{c}}
\newcommand{\Lscr}{\mathscr{L}}
\newcommand{\Nscr}{\mathscr{N}}
\newcommand{\BA}{{\mathbb {A}}}
\newcommand{\BC}{{\mathbb {C}}}
\newcommand{\BN}{{\mathbb {N}}}
\newcommand{\BQ}{{\mathbb {Q}}}
\newcommand{\BR}{{\mathbb {R}}}
\newcommand{\CA}{{\mathcal {A}}}
\newcommand{\CB}{{\mathcal {B}}}
\newcommand{\CC}{{\mathcal {C}}}
\newcommand{\CG}{{\mathcal {G}}}
\newcommand{\CL}{{\mathcal {L}}}
\newcommand{\CO}{{\mathcal {O}}}
\newcommand{\CP}{{\mathcal {P}}}
\newcommand{\CW}{{\mathcal {W}}}
\newcommand{\Fa}{{\mathfrak {a}}}
\newcommand{\Fb}{{\mathfrak {b}}}
\newcommand{\Fl}{{\mathfrak {l}}}
\newcommand{\Fm}{{\mathfrak {m}}}
\newcommand{\Fn}{{\mathfrak {n}}}
\newcommand{\Fp}{{\mathfrak {p}}}
\newcommand{\Fq}{{\mathfrak {q}}}
\newcommand{\Fu}{{\mathfrak {u}}}
\newcommand{\Fv}{{\mathfrak {v}}}
\newcommand{\ScM}{{\mathscr {M}}}
\renewcommand{\Re}{{\mathrm{Re}}}
\newcommand{\bs}{\backslash}
\newtheorem{thm}{Theorem}[section]
\newtheorem{prop}[thm]{Proposition}
\newtheorem {ques/conj}[thm]{Question/Conjecture}
\newtheorem{rmk}[thm]{Remark}
\newcommand{\Rmnum}[1]{\expandafter\@slowromancap\romannumeral #1@}
\begin{document}
\renewcommand{\theequation}{\arabic{equation}}
\numberwithin{equation}{section}

\title{Spectral Reciprocity for the first moment of triple product $L$-functions and applications}

\author{Xinchen Miao}
\address{Mathematisches Institut\\ Endenicher Allee 60, Bonn, 53115, Germany}
\address{Max Planck Institute for Mathematics\\ Vivatsgasse 7, Bonn, 53111, Germany}
\email{miao@math.uni-bonn.de, olivermiaoxinchen@gmail.com, miao@mpim-bonn.mpg.de}

\date{December, 2024}

\subjclass[2020]{Primary 11F70, 11M41; Secondary 11F72}

\keywords{subconvexity; triple product $L$-functions; spectral reciprocity; spectral decomposition; local and global period integrals}
\thanks{The author was supported by ERC Advanced Grant  101054336 and Germany's Excellence Strategy grant EXC-2047/1 - 390685813.}

\begin{abstract}

Let $F$ be a number field with adele ring $\BA_F$, $\pi_1, \pi_2$ be two fixed unitary automorphic representations of $\PGL_2(\BA_F)$ with finite coprime analytic conductor $\Fu$ and $\Fv$, $\Fq,\Fl$ be two coprime integral ideals with $(\Fq \Fl, \Fu\Fv)=1$. Following \cite{raphael2}, we estimate the first moment of $L(\frac{1}{2}, \pi \otimes \pi_1 \otimes \pi_2)$ twisted by the Hecke eigenvalues $\lambda_{\pi}(\Fl)$, where $\pi$ runs over unitary automorphic representations of finite conductor dividing $\Fu\Fv\Fq$. By applying the triple product integrals, spectral decomposition and Plancherel formula, we get a reciprocity formula links the twisted first moment of triple product $L$-functions 
to the spectral expansion of certain triple product periods over automorphic representations of finite conductor dividing $\Fl$. As application, we study the subconvexity problem for the triple product $L$-function in the level aspect and give a subconvex bound for $L(\frac{1}{2}, \pi \otimes \pi_1 \otimes \pi_2)$ in terms of the norm of $\Fq$.

\end{abstract}

\maketitle

\tableofcontents

\section{Introduction, Background and History} \label{intro}

Subconvexity estimates belong to the core topics in the theory of analytic number theory and $L$-functions. Moreover, they are one of the most challenging testing grounds for the strength of existing technology.
Let $F$ be a number field with adele ring $\BA_F$, and let $\Pi$ be an automorphic representation of a reductive group $G$. Let $L(s, \Pi)$ be the corresponding $L$-function associated to the representation $\Pi$. If $C(\Pi)$ denotes the analytic conductor of $L(s,\Pi)$, then the famous Phragmen-Lindelof principle gives the upper bound $C(\Pi)^{\frac{1}{4}+\epsilon}$ on the critical line $\Re(s)=\frac{1}{2}$. The subconvexity problem for $L(\frac{1}{2}, \Pi)$ is to establish a non-trivial upper bound of the shape as follows:
$$L(\frac{1}{2}, \Pi) \ll_{F,\epsilon} C(\Pi)^{\frac{1}{4}-\delta+\epsilon},$$
where $\delta$ is some positive absolute constant satisfying $0< \delta \leqslant \frac{1}{4}$ which is independent on $C(\Pi)$.

The subconvexity problem has a very long history for more than one hundred years (See \cite{mic} for more details). We are far from well-understood (except for some lower rank groups). Over the last one hundred years, mathematicians have developed many different approaches to understand the subconvexity problem, for example the classical approach involving the approximate functional equation, the circle (delta) method, Voronoi and Poisson summation formula, also the moment and integral representation approach involving spectral theory on higher rank groups, Petersson and Kuznetsov trace formula, relative trace formulae and so on.

In this paper, we focus on the subconvexity problem of the triple product $L$-function in the finite level aspect, which is the case $G=\GL_2 \times \GL_2 \times \GL_2$. Following \cite{raphael2}, we will use the period integral and moment method approach to establish a spectral reciprocity formula for the twisted first moment of the triple product $L$-function. Finally, via the amplification method, we can break the barrier of the convexity bound and get the subconvexity bound. 

In \cite{miao}, we also consider the subconvexity problem of the triple product $L$-function in the level aspect. However, there exist some differences between these two papers. In \cite{miao}, we vary all the three automorphic representations $\pi_1, \pi_2, \pi_3$ and allow joint ramification and conductor dropping range. Hence, we get the explicit hybrid subconvexity bound in a more general case. In this paper, we fix two cuspidal automorphic representations $\pi_1,\pi_2$ and only vary the automorphic representation $\pi_3$ with finite conductor dividing $\Fq$. The final subconvexity bound is only in terms of $\Fq$, however, the bound we get here is much stronger than \cite{miao} and attaches the limit of the amplification method.

In order to state our results in a more precise way, we need to give some definitions of notations.

Let $\pi_1, \pi_2$ be two fixed unitary cuspidal automorphic representations with fixed finite coprime conductor $\Fu$ and $\Fv$ (i.e.$(\Fu,\Fv)=1$), $\pi_3$ be a unitary automorphic representation of $\PGL_2(\BA_F)$ with finite conductor $\Fq$. Here $\Fq,\Fu,\Fv$ are three integral ideals of $\CO_F$, where $\CO_F$ is the ring of integers of the fixed number field $F$. We further assume that $\Fq$ is coprime with $\Fu\Fv$, i.e. $(\Fq, \Fu \Fv)=1$. The norm of integral ideals $\Fu,\Fv,\Fq$ are $u,v,q$. Therefore, we have $(q,uv)=1$. In order to consider the level aspect subconvexity problem in terms of $q$, we will let $q \rightarrow +\infty$. Hence, without loss of generality, since $u$ and $v$ are positive absolutely bounded integers, we can further assume that $u \ll_{\varepsilon} q^\varepsilon$ and $v \ll_{\varepsilon} q^\varepsilon$ for any $\varepsilon>0$.

We let the real number $\theta$ be the best exponent toward the Ramanujan-Petersson Conjecture for $\GL(2)$ over the number field $F$, we have $0 \leq \theta \leq \frac{7}{64}.$

Let $\Fl$ be an integral ideal of norm $\ell$. We assume that $(\Fl, \Fu\Fv\Fq)=1$, hence $(l, uvq)=1$. We define the following for the cuspidal contribution:

\begin{equation}\label{CuspidalPart}
\mathscr{C}(\pi_1,\pi_2,\Fq,\Fl):= C_1 \cdot \sum_{\substack{\pi \ \mathrm{cuspidal} \\ C(\pi) \vert \Fu \Fv \Fq }}\lambda_\pi(\Fl)\frac{L(\frac{1}{2}, \pi\otimes\pi_1\otimes\pi_2)}{\Lambda(1, \pi,\mathrm{Ad})}f(\pi_\infty)H(\pi,\Fq).
\end{equation}
Here $H$ is certain weight function in terms of finite many ramified non-archimedean local places defined in Section \ref{Connection} and $f(\pi_\infty)$ is defined in Section \ref{SectionInterlude}. Here the constant $C_1$ is a positive constant depending only on the number field $F$ and the nature of the three representations $\pi, \pi_1, \pi_2$. If $\pi,\pi_1,\pi_2$ are all cuspidal, then $C_1=2 \Lambda_F(2)$.

For the continuous part, we denote by $\pi_{\omega}(it)$ the principal series $\omega|\cdot|^{it}\boxplus \omegabar|\cdot|^{-it}$ and define similarly
\begin{equation}\label{ContinuousPart}
\begin{split}
\mathscr{E}(\pi_1,\pi_2,\Fq,\Fl):= C_2 \cdot \sum_{\substack{\omega\in\widehat{\F^\times\setminus \BA_\F^{1}} \\ C(\omega)^2 \vert \Fu \Fv \Fq}}&\int_{-\infty}^\infty \lambda_{\pi_\omega(it)}(\Fl)f(\pi_{\omega_\infty}(it))H(\pi_\omega(it),\Fq) \\ \times & \frac{L(\tfrac{1}{2}+it, \pi_1\otimes\pi_2\otimes\omega)L(\tfrac{1}{2}-it, \pi_1\otimes\pi_2\otimes\omegabar)}{\Lambda^*(1, \pi_\omega(it),\mathrm{Ad})} \frac{dt}{4\pi}.
\end{split}
\end{equation}
In this case, $C_2= 2 \Lambda_F^{*}(1)$ and $H$ is certain weight function in terms of finite many ramified non-archimedean local places defined in Section \ref{Connection}. We also note that the completed $L$-functions satisfy $\Lambda(s,\pi,\mathrm{Ad})=\Lambda(s,\chi^2)\Lambda(s,\chi^{-2})\zeta_F(s)$, where $\pi$ is an Eisenstein series normalized induced from a character $\chi$. In above Equation \ref{ContinuousPart}, $\chi=\omega \vert \cdot \rvert^{it}$ and $\omega$ is a unitary Hecke character. For $\chi^2 \neq 1$, we define $\Lambda^{*}(1,\pi,\mathrm{Ad})=\Lambda(1,\chi^2)\Lambda(1,\chi^{-2})\zeta_F^{*}(1)$, where $\zeta_F^{*}(1)$ is the residue of the Dedekind zeta function at $s=1$, and is a positive real number by the class number formula. We also note that the Dedekind zeta function has a simple pole at $s=1$. When $\chi^2=1$, we define $1/ \Lambda^{*}(1,\pi,\mathrm{Ad}):=0$ (Section 3.2 in \cite{BJN}). Hence, the function $1/ \Lambda^{*}(1,\pi,\mathrm{Ad})$ is continuous in terms of the induced character $\chi$. Since the Dedekind zeta function has a simple pole at $s=1$, if $\chi^2=1$ and $\pi$ is normalized induced from $\chi$, for some small real number $t$ satisfying $\vert t \rvert \leq 1$ (can take zero), we have $1/ \Lambda^{*}(1+it,\pi,\mathrm{Ad}) \gg_F t^2$.

We define
\begin{equation}\label{DefinitionMoment1}
\mathscr{M}(\pi_1,\pi_2,\Fq,\Fl) =\mathscr{C}(\pi_1,\pi_2,\Fq,\Fl)+ \mathscr{E}(\pi_1,\pi_2,\Fq,\Fl).
\end{equation}
The first theorem establishes an upper bound for this twisted first moment.

\begin{thm} \label{moment}
Let $\pi_1,\pi_2$ be two fixed unitary $\theta_i$-tempered ($i=1,2$) cuspidal automorphic representations with finite coprime conductor $\mathfrak {u}$ and $\mathfrak {v}$. We let the real number $\theta_i$ be the best exponent toward the Ramanujan-Petersson Conjecture for $\GL(2)$ over the number field $F$ for $\pi_1$ and $\pi_2$, we have $0 \leq \theta_i \leq \frac{7}{64}$. 
Let $\Fq, \Fl$ be two coprime ideals of $\CO_F$ with the condition $(\Fq \Fl, \mathfrak {u} \mathfrak {v})=1$ and write $q$ and $\ell$ for their respective norms. Therefore, the twisted first moment satisfies
\begin{equation} \label{moment1}
\ScM(\pi_1, \pi_2, \Fq, \Fl) \ll_{\pi_1, \pi_2, F, \varepsilon} (q \ell)^{\epsilon} \cdot \left(\ell^{\frac{1}{2}} \cdot q^{-\frac{1}{2}+\theta}+ \ell^{-\frac{1}{2}+\theta_1+\theta_2} \right).
\end{equation}

\end{thm}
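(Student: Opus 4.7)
The approach follows the period--integral framework of \cite{raphael2}. The central input is Ichino's formula, which expresses the ratio $L(\tfrac{1}{2},\pi\otimes\pi_1\otimes\pi_2)/\Lambda(1,\pi,\mathrm{Ad})$ as a product of local integrals times the square of a global triple product period $|\langle \varphi_\pi,\, \varphi_1\varphi_2\rangle|^2$ for suitably chosen test vectors $\varphi_i\in\pi_i$ and $\varphi_\pi\in\pi$. The plan is to choose the non--archimedean test vectors so that at places dividing $\Fl$ the local computation reproduces the Hecke eigenvalue $\lambda_\pi(\Fl)$, at places dividing $\Fq$ the local integral reproduces the weight $H(\pi,\Fq)$ from Section~\ref{Connection}, and at archimedean places the local factor matches $f(\pi_\infty)$. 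With these choices, the moment $\mathscr{M}(\pi_1,\pi_2,\Fq,\Fl)$ becomes a weighted sum over $\pi$ of squared triple product periods.

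Next, substituting this identity into $\mathscr{M}(\pi_1,\pi_2,\Fq,\Fl)$ and invoking the Plancherel formula on $L^2([\PGL_2])$, the cuspidal and Eisenstein sums defining $\mathscr{C}$ and $\mathscr{E}$ should collapse into a single geometric inner product
\[
\mathscr{M}(\pi_1,\pi_2,\Fq,\Fl) \;=\; \bigl\langle \Psi,\; \varphi_1\ovl{\varphi_1}\cdot\varphi_2\ovl{\varphi_2} \bigr\rangle_{[\PGL_2]},
\]
where $\Psi$ is an explicit kernel encoding the action of the Hecke operator at level $\Fl$ together with the ramified local data at places dividing $\Fq$. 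Unfolding this inner product via Bruhat decomposition reduces it to a finite collection of orbital integrals. Re--expanding these orbital integrals spectrally in the dual variable (via Poisson/Plancherel on the opposite unipotent group) then produces the spectral reciprocity formula announced in the abstract: a dual spectral expansion indexed by automorphic representations of $\PGL_2(\BA_F)$ of conductor dividing $\Fl$.

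From this reciprocity the bound \eqref{moment1} will split into two contributions. The principal dual--spectrum term has roughly $\ell$ automorphic representations (by the dimension of the level--$\Fl$ new/oldform space); each associated dual triple product period can be bounded using the $\theta$--tempered local estimates at places above $\Fq$, producing the saving $q^{-1/2+\theta}$, and Cauchy--Schwarz against the size $\ell$ of the dual spectrum yields the first term $\ell^{1/2}q^{-1/2+\theta}$. The second, degenerate contribution comes from the diagonal part of the orbital integrals (equivalently, the identity contribution on the dual side), where the only available estimate appeals to the Ramanujan exponents $\theta_1,\theta_2$ of the fixed representations $\pi_1,\pi_2$ and contributes $\ell^{-1/2+\theta_1+\theta_2}$. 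The continuous--spectrum pieces $\mathscr{E}$ are handled in parallel; near $\chi^2=1$ the vanishing of $1/\Lambda^*(1,\pi_\omega(it),\mathrm{Ad})$ neutralizes the simple pole of the Dedekind zeta factor, so these contributions are absorbed into the same bound.

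The main obstacle will be the precise local analysis at the places $v\mid\Fq$ on the dual side. One must construct microlocalized newvectors in the style of \cite{raphael2} whose triple product integrals against $\varphi_1,\varphi_2$ can be computed explicitly, and show that the resulting dual local integrals satisfy the stated $q^{-1/2+\theta}$ estimate uniformly over the spectrum of conductor dividing $\Fl$. A secondary technical difficulty is maintaining compatibility at the coprime ramified places dividing $\Fu$ or $\Fv$: the test vectors must be chosen so as to avoid extraneous loss factors in $u$ or $v$, consistent with $u,v\ll q^\varepsilon$, and the Eisenstein parameter $\omega$ must be controlled uniformly in its ramification so that neither side of the reciprocity inherits a spurious conductor dependence.
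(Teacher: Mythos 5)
Your proposal captures the high-level narrative (Ichino's formula, Plancherel, spectral reciprocity between levels $\Fq$ and $\Fl$, a diagonal term controlled by the temperedness of $\pi_1,\pi_2$), but the mechanism you give for the reciprocity step is not what the paper uses and does not obviously work. You propose to write $\ScM$ as a geometric inner product $\langle\Psi,\varphi_1\ovl{\varphi}_1\varphi_2\ovl{\varphi}_2\rangle$, unfold it by Bruhat decomposition into orbital integrals, and then re-expand those orbital integrals by Poisson/Plancherel on the opposite unipotent. That is the relative-trace-formula route; there is no Bruhat decomposition, no orbital integral, and no Poisson summation in the paper's argument. The actual mechanism is a purely algebraic \emph{regrouping} of the period
\[
\CP_\Fq(\Fl,\Phi,\Phi)=\bigl\langle \Trm_{\Fl}\bigl(\varphi_1\varphi_2^{\Fq}\bigr),\ \varphi_1\varphi_2^{\Fq}\bigr\rangle,\qquad \Phi=\varphi_1\varphi_2^{\Fq}.
\]
On one hand one spectrally decomposes this in the space of level-$\Fu\Fv\Fq$ forms to recover $\ScM$ (equation \eqref{Expansion1}). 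On the other hand one uses the Hecke relation \eqref{RelationHecke} to write $\Trm_{\Fl}$ as a weighted sum of translations by $a(\varpi_\Fp^{n-2k})$, and simultaneously converts the translation by $a(\varpi_{\Fq})$ into the Hecke operator $\Trm_{\Fq}$, arriving at the symmetric relation \eqref{Symmetry1}: after pairing $\varphi_1$ against $\ovl{\varphi}_1$ and $\varphi_2$ against $\ovl{\varphi}_2$ one obtains
\[
q^{1/2}\tfrac{\zeta_\Fq(1)}{\zeta_\Fq(2)}\CP_\Fq(\Fl,\Phi,\Phi)=\tfrac{1}{\ell^{1/2}}\sum_{0\le k\le n/2}\gamma_{n-2k}\Bigl(\CP_{\Fp^{n-2k}}(\Fq,\Psi_1,\Psi_2)-\tfrac{1}{q_1}\CP_{\Fp^{n-2k}}(\Fq_1^{m-2},\Psi_1,\Psi_2)\Bigr)
\]
with $\Psi_1=\ovl{\varphi}_1\varphi_1^{\Fp^{n-2k}}$, $\Psi_2=\varphi_2\ovl{\varphi}_2^{\Fp^{n-2k}}$, and each $\CP_{\Fp^{n-2k}}(\Fq,\Psi_1,\Psi_2)$ is then spectrally decomposed over representations of conductor dividing $\Fp^{n-2k}\mid\Fl$. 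No geometric unfolding is required: the two spectral decompositions live on either side of this algebraic identity.

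Your error-term bookkeeping is also off in a way that matters. The term $\ell^{1/2}q^{-1/2+\theta}$ does not arise from ``$\theta$-tempered local estimates at places above $\Fq$'' combined with ``Cauchy--Schwarz against the size $\ell$ of the dual spectrum.'' The $q^{-1/2}$ is the normalization on the left side of \eqref{Symmetry1}; the $q^{\theta}$ comes from the Hecke eigenvalue bound $|\lambda_\pi(\Fq)|\le\tau(\Fq)q^{\theta}$ for $\pi$ in the dual (conductor $\mid\Fl$) spectrum; and the $\ell^{1/2}$ comes from the degree $\tfrac{1}{\ell^{1/2}}\sum_k\gamma_{n-2k}\asymp\ell^{1/2}$ of $\Trm_{\Fl}$, with the dual generic spectral sum itself bounded by $\ell^{\varepsilon}$ via Cauchy--Schwarz and the $L^2$-estimate of Proposition \ref{PropositionGeneric}. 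Your description of the second term $\ell^{-1/2+\theta_1+\theta_2}$ as the ``identity contribution on the dual side'' is essentially correct: it is the one-dimensional (constant) contribution $\Cscr_2$ in the dual spectral decomposition, bounded via the Hecke relation and the $\theta_i$-temperedness of $\pi_1,\pi_2$.
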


Combining Theorem \ref{moment} with the amplification method, we obtain the following two subconvexity bounds in the level (also depth) aspect.

\begin{thm} \label{subconvex1}
Let $F$ be a number field with ring of integers $\CO_F$. Let $\Fq$ be an integral ideal of $\CO_F$ of norm $q$ and $\pi_3$ a cuspidal automorphic representation of $\PGL_2(\BA_F)$ with finite conductor $\Fq$. Let $\pi_1,\pi_2$ be fixed unitary $\theta_i$-tempered ($i=1,2$) cuspidal automorphic representations with finite fixed coprime conductor $\mathfrak {u}$ and $\mathfrak {v}$. Assume that for all archimedean places $v \vert \infty $, either $\pi_{1,v}$ or $\pi_{2,v}$ is a principal series representation. If $(\Fq, \mathfrak {u} \mathfrak {v})=1$, then for any $\varepsilon>0$, we have the following subconvex estimation
\begin{equation}\label{SubConv1}
L\left( \tfrac{1}{2}, \pi_1 \otimes\pi_2 \otimes\pi_3\right) \ll_{\varepsilon, F,\pi_1,\pi_2,\pi_\infty} q^{1-(\frac{1}{2}-\theta)(1-2\theta_1-2\theta_2)/(3-2\theta_1-2\theta_2)+\varepsilon}.
\end{equation}
Here we have polynomial dependence on $u$ and $v$ in the above subconvexity bound. If we pick $\theta=\theta_1=\theta_2=\frac{7}{64}$, then we have $(\frac{1}{2}-\theta)(1-2\theta_1-2\theta_2)/(3-2\theta_1-2\theta_2)=\frac{225}{2624}>\frac{1}{11.7}$. Hence, we have
$$ L\left( \tfrac{1}{2}, \pi_1 \otimes\pi_2\otimes\pi_3\right) \ll_{\varepsilon, F,\pi_1,\pi_2,\pi_\infty} q^{1-\frac{225}{2624}+\varepsilon}$$
unconditionally. If we further assume the Ramanujan-Petersson Conjecture, we will have
$$ L\left( \tfrac{1}{2}, \pi_1 \otimes\pi_2\otimes\pi_3\right) \ll_{\varepsilon, F,\pi_1,\pi_2,\pi_\infty} q^{1-\frac{1}{6}+\varepsilon}.$$
\end{thm}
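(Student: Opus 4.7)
The plan is to combine Theorem \ref{moment} with the standard Iwaniec-Friedlander amplification technique. Let $P\geq 1$ be a parameter to be optimized, let $\Pcal$ denote the set of prime ideals $\Fp\subset \Ocal_F$ with $\|\Fp\|\in[P,2P]$ and $(\Fp,\Fu\Fv\Fq)=1$, and set
\begin{equation*}
A(\pi):=\sum_{\Fp\in\Pcal}\bigl(\alpha_\Fp \lambda_\pi(\Fp)+\beta_\Fp \lambda_\pi(\Fp^2)\bigr),\qquad |\alpha_\Fp|,|\beta_\Fp|\leq 1.
\end{equation*}
The Hecke relation $\lambda_{\pi_3}(\Fp)^2-\lambda_{\pi_3}(\Fp^2)=1$ at every $\Fp\nmid\Fq$ allows a prime-by-prime choice of $(\alpha_\Fp,\beta_\Fp)$ making each summand $\geq\tfrac{1}{2}$, so by the prime ideal theorem for $F$, $|A(\pi_3)|\gg|\Pcal|\gg P/\log P$, hence $|A(\pi_3)|^2\gg P^{2-\varepsilon}$.

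Expanding the square via Hecke multiplicativity and the usual identities for $\lambda_\pi(\Fp^a)\lambda_\pi(\Fp^b)$ gives $|A(\pi)|^2=\sum_{\|\Fl\|\leq(2P)^4}y_\Fl\,\lambda_\pi(\Fl)$, where the coefficients $y_\Fl$ satisfy $|y_\Fl|\ll 1$ for $\Fl\neq\Ocal_F$ (the support consists of $O(P^2)$ ideals $\Fp_1^{a_1}\Fp_2^{a_2}$ with $\Fp_i\in\Pcal$ and $a_i\in\{0,1,2\}$) and $y_{\Ocal_F}\ll P$. By the positivity of $L(\tfrac{1}{2},\pi\otimes\pi_1\otimes\pi_2)$ on $\PGL_2^3$ (via the Ichino/Garrett identity), the manifest non-negativity of the continuous-part integrand $|L(\tfrac{1}{2}+it,\pi_1\otimes\pi_2\otimes\omega)|^2$, and the non-negativity of the spectral weights, we may drop every term of the amplified first moment except $\pi=\pi_3$:
\begin{equation*}
\frac{L(\tfrac{1}{2},\pi_1\otimes\pi_2\otimes\pi_3)}{\Lambda(1,\pi_3,\Ad)}\,f(\pi_{3,\infty})\,H(\pi_3,\Fq)\,|A(\pi_3)|^2\ll\Bigl|\sum_\Fl y_\Fl\,\ScM(\pi_1,\pi_2,\Fq,\Fl)\Bigr|.
\end{equation*}
Combined with the standard bounds $\Lambda(1,\pi_3,\Ad)\gg q^{-\varepsilon}$ (Hoffstein-Lockhart), $H(\pi_3,\Fq)\gg q^{-1-\varepsilon}$ (the Plancherel-type normalization for a new vector at level $\Fq$), and $f(\pi_{3,\infty})\gg 1$ (crucially using the hypothesis that at each archimedean place one of $\pi_{1,v},\pi_{2,v}$ is principal series), this yields
\begin{equation*}
L(\tfrac{1}{2},\pi_1\otimes\pi_2\otimes\pi_3)\ll q^{1+\varepsilon}P^{-2}\sum_\Fl|y_\Fl|\,|\ScM(\pi_1,\pi_2,\Fq,\Fl)|.
\end{equation*}

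Substituting the bound of Theorem \ref{moment}, the piece $\ell^{1/2}q^{-1/2+\theta}$ is maximized by $\Fl=\Fp_1^2\Fp_2^2$ with $\ell\sim P^4$, giving $\ll P^4 q^{-1/2+\theta}$; the piece $\ell^{-1/2+\theta_1+\theta_2}$ is maximized by $\Fl=\Fp_1\Fp_2$ with $\ell\sim P^2$, giving $\ll P^{1+2(\theta_1+\theta_2)}$; the diagonal $y_{\Ocal_F}$ contributes only $\ll Pq^\varepsilon$. Therefore
\begin{equation*}
L(\tfrac{1}{2},\pi_1\otimes\pi_2\otimes\pi_3)\ll q^{1+\varepsilon}\bigl(P^2 q^{-1/2+\theta}+P^{-1+2(\theta_1+\theta_2)}\bigr),
\end{equation*}
and balancing the two terms via $P^{3-2(\theta_1+\theta_2)}=q^{1/2-\theta}$, i.e.\ $P=q^{(1/2-\theta)/(3-2\theta_1-2\theta_2)}$, produces the subconvex exponent $1-(\tfrac{1}{2}-\theta)(1-2\theta_1-2\theta_2)/(3-2\theta_1-2\theta_2)$; the numerical specializations then follow from $\theta=\theta_1=\theta_2=7/64$ unconditionally and $\theta=\theta_1=\theta_2=0$ under Ramanujan-Petersson. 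Beyond Theorem \ref{moment} itself, the main obstacle is to establish the quantitative lower bound $H(\pi_3,\Fq)\,f(\pi_{3,\infty})\gg q^{-1-\varepsilon}$ at the target $\pi_3$: the non-archimedean factor reduces to precise new-vector computations at each prime dividing $\Fq$, while the archimedean factor is exactly where the principal-series assumption on $\pi_{1,v}$ or $\pi_{2,v}$ is needed, guaranteeing non-vanishing and uniform positivity of the local triple-product integral underlying $f(\pi_{3,\infty})$.
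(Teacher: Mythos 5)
Your proposal is correct and follows essentially the same route as the paper: an amplified first moment, using Theorem \ref{moment}, the positivity of the Ichino-type period expansion, the lower bounds on $H(\pi_3,\Fq)$ and $f(\pi_{3,\infty})$, and the same optimization $P^{3-2\theta_1-2\theta_2}=q^{1/2-\theta}$ yielding the identical exponent. The only cosmetic difference is that the paper's amplifier is the sum of two separate squares $\bigl(\sum_{\Fp\leq L}\lambda_\pi(\Fp)x(\Fp)\bigr)^2+\bigl(\sum_{\Fp\leq L}\lambda_\pi(\Fp^2)x(\Fp^2)\bigr)^2$ with $x=\mathrm{sgn}\,\lambda_{\pi_3}$, while you use a single mixed sum squared over a dyadic range; both variants expand into the same family of Hecke ideals $\Fl$ and give the same final bound.
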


As a parallel of Theorem \ref{subconvex1}, we have the following
\begin{thm} \label{subconvex2}
Let $F$ be a number field with ring of integers $\CO_F$. Let $\Fq$ be an integral ideal of $\CO_F$ of norm $q$ and $\chi$ a unitary Hecke character with finite conductor $\Fq$. Let $\pi_1,\pi_2$ be fixed unitary $\theta_i$-tempered ($i=1,2$) cuspidal automorphic representations with finite fixed coprime conductor $\mathfrak {u}$ and $\mathfrak {v}$.
If $(\Fq, \mathfrak {u} \mathfrak {v})=1$, then for any $\varepsilon>0$, we have the following subconvex estimation
\begin{equation}\label{SubConv2}
 L\left( \tfrac{1}{2}, \pi_1\otimes\pi_2 \otimes \chi \right)  \ll_{\varepsilon,\F,\pi_1,\pi_2,\pi_\infty} q^{1-(\frac{1}{2}-\theta)(1-2\theta_1-2\theta_2)/(3-2\theta_1-2\theta_2)+\varepsilon}.
\end{equation}
If we pick $\theta=\theta_1=\theta_2=\frac{7}{64}$, then we have
$$ L\left( \tfrac{1}{2}, \pi_1 \otimes\pi_2\otimes\chi \right) \ll_{\varepsilon, F,\pi_1,\pi_2,\pi_\infty} q^{1-\frac{225}{2624}+\varepsilon}$$
unconditionally.  
\end{thm}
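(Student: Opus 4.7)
The plan is to mirror the proof of Theorem \ref{subconvex1}, replacing the cuspidal target by the Eisenstein representation $\pi_\chi(0)=\chi\boxplus\chibar$. The Rankin--Selberg factorization
\[
L\bigl(\tfrac12,\pi_1\otimes\pi_2\otimes\pi_\chi(0)\bigr)
=L\bigl(\tfrac12,\pi_1\otimes\pi_2\otimes\chi\bigr)\,L\bigl(\tfrac12,\pi_1\otimes\pi_2\otimes\chibar\bigr)
=\bigl|L\bigl(\tfrac12,\pi_1\otimes\pi_2\otimes\chi\bigr)\bigr|^{2}
\]
identifies the $L$-value we wish to bound with the mass carried by the spectral point $(\omega,t)=(\chi,0)$ in $\mathscr{E}(\pi_1,\pi_2,\Fq,\Fl)$. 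Every spectral summand in $\mathscr{M}=\mathscr{C}+\mathscr{E}$ is nonnegative: the cuspidal terms $L(\tfrac12,\pi\otimes\pi_1\otimes\pi_2)$ by Ichino's formula, and the Eisenstein terms as $|L(\tfrac12+it,\pi_1\otimes\pi_2\otimes\omega)|^{2}$. This positivity is precisely what makes the amplification work.

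First, I would choose an amplifier $x_\Fp:=\chi(\Fp)+\chibar(\Fp)\in\Rbb$ supported on primes $\Fp$ of norm $N\Fp\sim P$ with $(\Fp,\Fu\Fv\Fq)=1$, and set $A(\pi):=\sum_\Fp x_\Fp\,\lambda_\pi(\Fp)$. At the target $\lambda_{\pi_\chi(0)}(\Fp)=x_\Fp$, so the prime number theorem applied to $L(s,\chi^{\pm 2})$ yields
\[
A(\pi_\chi(0))=\sum_\Fp x_\Fp^{\,2}\;\gg\;P/\log P,\qquad|A(\pi_\chi(0))|^{2}\;\gg\;P^{2}/(\log P)^{2}.
\]
Using Hecke multiplicativity $\lambda_\pi(\Fp_1)\lambda_\pi(\Fp_2)=\lambda_\pi(\Fp_1\Fp_2)+\delta_{\Fp_1=\Fp_2}$, the amplified first moment $\sum_{\pi}(\cdots)|A(\pi)|^{2}$ unfolds into a linear combination of $\mathscr{M}(\pi_1,\pi_2,\Fq,\Fl)$ with $\Fl\in\{\Oscr_F,\Fp^{2},\Fp_1\Fp_2\}$ and coefficients of size $q^{\varepsilon}$. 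Dropping all nonnegative summands except the target yields
\[
\bigl|L(\tfrac12,\pi_1\otimes\pi_2\otimes\chi)\bigr|^{2}\cdot\frac{H(\pi_\chi(0),\Fq)\,f(\pi_{\chi_\infty}(0))}{\Lambda^{*}(1,\pi_\chi(0),\mathrm{Ad})}\cdot|A(\pi_\chi(0))|^{2}\;\ll\;\sum_{\Fl}c_{\Fl}\,\mathscr{M}(\pi_1,\pi_2,\Fq,\Fl),
\]
and feeding the pointwise bound of Theorem \ref{moment} into the right-hand side, then optimizing $P$ by the same balance that controls the proof of Theorem \ref{subconvex1}, produces \eqref{SubConv2} with exponent $1-(\tfrac12-\theta)(1-2\theta_1-2\theta_2)/(3-2\theta_1-2\theta_2)$.

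The main obstacle is the degenerate case $\chi^{2}=\mathbf{1}$, where $1/\Lambda^{*}(1,\pi_\chi(0),\mathrm{Ad})$ vanishes and one cannot pointwise localize at $t=0$. In that case one integrates the amplified moment against a short $t$-window $|t|\le T=q^{-\delta}$, invoking the lower bound $1/\Lambda^{*}(1+it,\pi_\chi(it),\mathrm{Ad})\gg t^{2}$ recorded after \eqref{ContinuousPart}, together with the continuity of $L(\tfrac12+it,\pi_1\otimes\pi_2\otimes\chi)$ in $t$, to recover the same isolated contribution up to a $q^{O(\delta)}$ loss. All remaining ingredients --- Ichino positivity, the Hecke expansion of $|A(\pi)|^{2}$, Plancherel on the spectral side, and the balancing of $P$ against the twin terms $\ell^{1/2}q^{-1/2+\theta}$ and $\ell^{-1/2+\theta_1+\theta_2}$ from \eqref{moment1} --- are formally identical to the cuspidal case, which is precisely why the subconvex exponent coincides with that of Theorem \ref{subconvex1}.
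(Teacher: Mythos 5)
Your high-level strategy — exploit positivity of both the cuspidal and Eisenstein contributions, identify $|L(\tfrac12,\pi_1\otimes\pi_2\otimes\chi)|^2$ as the mass at the spectral point $(\omega,t)=(\chi,0)$, amplify, unfold via Hecke multiplicativity into moments $\mathscr{M}(\pi_1,\pi_2,\Fq,\Fl)$ with $\Fl\in\{\Oscr_F,\Fp^2,\Fp_1\Fp_2\}$, apply Theorem \ref{moment}, and optimize — is the same as the paper's, and the recognition of the degenerate case $\chi^2=1$ is correct. However, there are two genuine gaps. First, your amplifier $A(\pi)=\sum_{\Fp}x_\Fp\lambda_\pi(\Fp)$ with $x_\Fp=\chi(\Fp)+\overline{\chi}(\Fp)=2\Re\chi(\Fp)$ does \emph{not} admit the claimed lower bound $A(\pi_\chi(0))=\sum_\Fp x_\Fp^2\gg P/\log P$ unconditionally. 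Indeed $\sum_\Fp x_\Fp^2 = 2\pi'(P) + 2\Re\sum_\Fp\chi^2(\Fp)$, and the prime number theorem for $L(s,\chi^2)$ you invoke is not available when the conductor of $\chi^2$ is $\asymp q$ and $P=q^{o(1)}$ — which is precisely the range the amplification forces (the paper takes $L\approx q^{1/7}$). The character sum $\sum_{\Nm\Fp\leq P}\chi^2(\Fp)$ could in principle nearly cancel $\pi'(P)$. This is the classical reason Iwaniec-style amplifiers use \emph{both} $\lambda_\pi(\Fp)$ and $\lambda_\pi(\Fp^2)$: the Hecke relation $\lambda_\pi(\Fp)^2=\lambda_\pi(\Fp^2)+1$ guarantees $\max(|\lambda_\pi(\Fp)|,|\lambda_\pi(\Fp^2)|)\geq\tfrac12$ for every prime, so no PNT for the twist is needed. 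The paper's amplifier $\CA(\pi)=(\sum_\Fp \lambda_\pi(\Fp)x(\Fp))^2+(\sum_\Fp\lambda_\pi(\Fp^2)x(\Fp^2))^2$ with $x(\Fl)=\mathrm{sgn}(\lambda_{\pi_3}(\Fl))$ — note $\lambda_{\chi\boxplus\overline\chi}(\Fl)$ is real since $\pi_\chi(0)$ is self-dual — gives the unconditional bound $\CA(\pi_3)\gg L^2/(\log L)^2$ via exactly this mechanism. You need to replace your one-term amplifier with a two-term one.

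Second, your treatment of the degenerate case $\chi^2=\mathbf{1}$ via a short window $|t|\leq T=q^{-\delta}$ is not complete as stated: appealing to ``continuity of $L(\tfrac12+it,\pi_1\otimes\pi_2\otimes\chi)$ in $t$'' does not give a quantitative lower bound $\int_{|t|\leq T}|L(\tfrac12+it)|^2 t^2\,dt \gg |L(\tfrac12)|^2 T^3$; a priori the value could drop sharply off $t=0$. To make a window argument rigorous one needs a quantitative modulus of continuity from the approximate functional equation (with $T\asymp 1/\log q$), or some mean-value argument. The paper instead cites Blomer's application of H\"older's inequality \cite[Section 4]{blo}, which distributes the order-two zero of $1/\Lambda^*(1,\pi_\chi(it),\mathrm{Ad})$ across the integral rather than localizing. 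Both routes are plausible, but yours needs the continuity step filled in before it is a proof.
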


\begin{rmk}
If we further assume that both $\pi_1$ and $\pi_2$ are holomorphic cusp forms which satisfy the Ramanujan-Petersson Conjecture and $\theta_1=\theta_2=0$, then by Theorem \ref{subconvex2}, we have the following inequality if we pick $\theta=\frac{7}{64}$:
$$ L\left( \tfrac{1}{2}, \pi_1 \otimes\pi_2\otimes\chi \right) \ll_{\varepsilon, F,\pi_1,\pi_2,\pi_\infty} q^{1-\frac{25}{192}+\varepsilon}.$$
If we pick $\theta_1=0$ and $\theta_2=\theta=\frac{7}{64}$, we have
$$ L\left( \tfrac{1}{2}, \pi_1 \otimes\pi_2\otimes\chi \right) \ll_{\varepsilon, F,\pi_1,\pi_2,\pi_\infty} q^{1-\frac{625}{5696}+\varepsilon}.$$
Since $\frac{1}{6}> \frac{1}{7.5}> \frac{25}{192}> \frac{1}{7.7}>\frac{1}{9}> \frac{625}{5696}> \frac{1}{9.2}> \frac{1}{11.5}> \frac{225}{2624}>\frac{1}{11.7}> \frac{1}{12}> \frac{1}{16}>\frac{1}{20}> \frac{1}{23}$, then our Theorem \ref{subconvex2} is an improvement of the subconvexity bound for $\GL(1)$ twists of $\GL(2) \times \GL(2)$ Rankin-Selberg $L$-functions both in the finite level and depth aspect (See \cite[Theorem 1]{ghosh} and \cite[Theorem 1]{sun} for more details and comparison).
\end{rmk}

\section{Automorphic Forms Preliminaries}  \label{pre}

In this paper, $F/\mathbb{Q}$ will denote a fixed number field with ring of intergers $\CO_F$ and discriminant $\Delta_F$. We make the assumption that all prime ideals considering in this paper ($\Fq,\Fl,\mathfrak{u},\mathfrak{v}$) do not divide $\Delta_F$. We let $\Lambda_F$ be the complete $\zeta$-function of $F$; it has a simple pole at $s=1$ with residue $\Lambda_F^*(1)$.

For $v$ a place of $F$, we set $F_v$ for the completion of $F$ at the place $v$. We will also write $F_{\Fp}$ if $v$ is finite place that corresponds to a prime ideal $\Fp$ of $\CO_F$. If $v$ is non-Archimedean, we write $\CO_{F_v}$ for the ring of integers in $F_v$ with maximal ideal $\Fm_v$ and uniformizer $\varpi_v$. The size of the residue field is $q_v=\CO_{F_v}/\Fm_v$. For $s\in\BC$, we define the local zeta function $\zeta_{F_v}(s)$ to be $(1-q_v^{-s})^{-1}$ if $v<\infty$, $\zeta_{F_v}(s)=\pi^{-s/2}\Gamma(s/2)$ if $v$ is real and $\zeta_{F_v}(s)=2(2\pi)^{-s}\Gamma(s)$ if $v$ is complex.

The adele ring of $F$ is denoted by $\BA_F$ and its unit group $\BA^\times_F$. We also set $\widehat{\CO}_F:=\prod_{v<\infty} \CO_{F_v}$ for the profinite completion of $\CO_F$ and $\BA^1_F=\{ x\in \BA_F^\times \; : \; \vert x \rvert=1 \}$, where $\vert \cdot \rvert : \BA_F^\times \rightarrow \BR_{>0}$ is the adelic norm map.

We denote by $\psi = \prod_v \psi_v$ the additive character $\psi_{\BQ} \circ \text{Tr}_{F/ \BQ}$ where $\psi_{\BQ}$ is the additive character on $\BQ \setminus \BA_{\BQ}$ with value $e^{2\pi i x}$ on $\BR$. For $v<\infty$, we let $d_v$ be the conductor of $\psi_v$ : this is the smallest non-negative integer such that $\psi_v$ is trivial on $\Fm_v^{d_v}$. In this case, we have $\Delta_F=\prod_{v<\infty} q_v^{d_v}$. We also set $d_v=0$ for the Archimedean local place $v$.

If $R$ is a commutative ring, $\GL_2(R)$ is by definition the group of $2\times 2$ matrices with coefficients in $R$ and determinant in $R^*$. We also defined the following standard subgroups
$$B(R)=\left\{\begin{pmatrix} a & b \\ & d\end{pmatrix} \; : \; a,d \in R^*, b\in R\right\}, \; P(R)= \left\{ \begin{pmatrix} a & b \\ & 1\end{pmatrix} \; : \; a\in R^*, b\in R\right\}, $$
$$Z(R)=\left\{\begin{pmatrix} z & \\ & z\end{pmatrix} \; : \; z\in R^*\right\}, \; A(R)=\left\{\begin{pmatrix} a &  \\ & 1\end{pmatrix} \; : \; a\in R^*\right\}, $$ $$N(R)=\left\{\begin{pmatrix} 1 & b \\ & 1\end{pmatrix} \; : \; b\in R\right\}.$$
We also set
$$n(x)=\begin{pmatrix}
1 & x \\ & 1
\end{pmatrix}, \hspace{0.4cm}w = \begin{pmatrix} & 1 \\ -1 & \end{pmatrix} \hspace{0.4cm} \mathrm{and} \hspace{0.4cm} a(y)= \begin{pmatrix}
y & \\ & 1
\end{pmatrix}.
$$
For any place $v$, we let $K_v$ be the maximal compact subgroup of $G(F_v)$ defined by
\begin{equation}\label{Compact}
K_v= \left\{ \begin{array}{lcl}
\GL_2(\CO_{F_v}) & \text{if} & v \; \mathrm{is \; finite} \\
 & & \\
\mathrm{O}_2(\BR) & \text{if} & v \; \mathrm{is \; real} \\
 & & \\
\mathrm{U}_2(\BC) & \text{if} & v \; \mathrm{is \; complex}.
\end{array}\right.
\end{equation}

We also set $K:= \prod_v K_v$. If $v<\infty$ and $n\geqslant 0$, we define 
$$K_{v,0}(\varpi_v^n):= \left\{ \begin{pmatrix}
a & b \\ c & d
\end{pmatrix} \in K_v \; : \;  c \in \Fm_v^n\right\}.$$
If $\Fb$ is an integral ideal of $\CO_F$ with prime factorization $\Fb=\prod_{v<\infty}\Fp_v^{f_v(\Fb)}$ ($\Fp_v$ is the prime ideal corresponding to the finite place $v$), then we set
$$K_0(\Fb):=\prod_{v<\infty} K_{v,0}\left(\varpi_v^{f_v(\Fb)} \right).$$

We use the same measures normalizations as in \cite{subconvexity}. At each place $v$, $dx_v$ denotes a self-dual measure on $F_v$ with respect to $\psi_v$. If $v<\infty$, $dx_v$ gives the measure $q_v^{-d_v/2}$ to $\CO_{F_v}$. We define $dx=\prod_v dx_v$ on $\BA_F$. We take $d^\times x_v=\zeta_{F_v}(1)\frac{dx_v}{\vert x_v \rvert}$ as the Haar measure on the multiplicative group $F_v^\times$ and $d^\times x = \prod_v d^\times x_v$ as the Haar measure on the idele group $\BA^\times_F$.
We provide $K_v$ with the probability Haar measure $dk_v$. We identify the subgroups $Z(F_v)$, $N(F_v)$ and $A(F_v)$ with respectively $F_v^\times,$ $F_v$ and $F_v^\times$ and equipped them with the measure $d^\times z$, $dx_v$ and $d^\times y_v$. Using the Iwasawa decomposition, namely $\GL_2(F_v)=Z(F_v)N(F_v)A(F_v)K_v$, a Haar measure on $\GL_2(\F_v)$ is given by 
\begin{equation}\label{HaarMeasure}
dg_v = d^\times z dx_v \frac{d^\times y_v}{\vert y_v \rvert}dk_v.
\end{equation} 
The measure on the adelic points of the various subgroups are just the product of the local measures defined above. We also denote by $dg$ the quotient measure on $$X:= Z(\BA_F)\GL_2(F) \setminus \GL_2(\BA_F),$$ 
with total mass $V_F:=\mathrm{vol}(X)<\infty$. 

Let $\pi=\otimes_v\pi_v$ be a unitary automorphic representation of $\PGL_2(\BA_F)$ and fix $\psi$ a character of $\F \setminus \BA_F$. The intertwiner
\begin{equation}\label{NaturalIntertwiner}
\pi \ni \varphi \longmapsto  W_\varphi(g):=\int_{F\setminus \BA_F} \varphi(n(x)g)\psi(-x)dx,
\end{equation}
realizes a $\GL_2(\BA_F)$-equivariant embedding of $\pi$ into a space of functions $W : \GL_2(\BA_F) \rightarrow \BC$ satisfying $W(n(x)g))=\psi(x)W(g)$. The image is called the Whittaker model of $\pi$ with respect to $\psi$ and it is denoted by $\CW(\pi,\psi)$. This space has a factorization $\otimes_v \CW(\pi_v,\psi_v)$ into local Whittaker models of $\pi_v$. A pure tensor $\otimes_v \varphi_v$ has a corresponding decomposition $\prod_v W_{\varphi_v}$ where $W_{\varphi_v}(1)=1$ and is $K_v$-invariant for almost all place $v$.

We define a normalized inner product on the space $\CW(\pi_v,\psi_v)$ by the rule 
\begin{equation}\label{NormalizedInnerProduct}
\vartheta_v(W_v,W_v') :=\zeta_{F_v}(2) \times \frac{\int_{F_v^\times}W_v(a(y))\overline{W}_v'(a(y))d^\times y}{\zeta_{F_v}(1)L(1, \pi_v,\mathrm{Ad})}.
\end{equation}
This normalization has the good property that $\vartheta_v(W_v,W_v)=1$ for $\pi_v$ and $\psi_v$ unramified and $W_v(1)=1$ \cite[Proposition 2.3]{classification}. We also fix for each place $v$ an invariant inner product $\langle \cdot,\cdot\rangle_v$ on $\pi_v$ and an equivariant isometry $\pi_v \rightarrow \CW(\pi_v,\psi_v)$ with respect to \eqref{NormalizedInnerProduct}.

Let $L^2(X)$ be the Hilbert space of square integrable functions $\varphi : X \rightarrow \BC$.  If $\pi$ is a cuspidal representation, for any $\varphi\in\pi$, we can define the $L^2$-norm by
\begin{equation} \label{L^2normCuspidal}
||\varphi||_{L^2}^2:= \int_{X} |\varphi(g)|^2 dg.
\end{equation}
We denote by $L_{\mathrm{cusp}}^2(X)$ the closed subspace of cusp forms, i.e. the functions $\varphi\in L^2(X)$ with the additional property that 
$$\int_{F \setminus \BA_F}\varphi(n(x)g)dg=0, \ \ \mathrm{a.e.} \ g\in \GL_2(\BA_F).$$
Each $\varphi\in L^2_{\mathrm{cusp}}(X)$ admits a Fourier expansion
\begin{equation}\label{FourierSeries}
\varphi(g)= \sum_{\alpha \in F^\times} W_\varphi \left(\begin{pmatrix} \alpha & \\ & 1 \end{pmatrix} g \right),
\end{equation}
\begin{equation}\label{Whittaker-Cuspidal}
W_\varphi(g)=\int_{ F \setminus \BA_F}\varphi\left( \begin{pmatrix} 1 & x \\ & 1 \end{pmatrix} g \right) \psi(-x)dx.
\end{equation}
The group $\GL_2(\BA_F)$ acts by right translations on both spaces $L^2(X)$ and $L_{\mathrm{cusp}}^2(X)$ and the resulting representation is unitary with respect to \eqref{L^2normCuspidal}. It is well known that each irreducible component $\pi$ decomposes into $\pi = \otimes_v \pi_v$ where $\pi_v$ are irreducible and unitary representations of the local groups $\GL_2(F_v)$. The spectral decomposition is established in the first four chapters of \cite{analytic} and gives the orthogonal decomposition
\begin{equation}\label{OrthogonalDecomposition}
L^2(X)=L^2_{\mathrm{cusp}}(X)\oplus L^2_{\mathrm{res}}(X)\oplus L^2_{\mathrm{cont}}(X).
\end{equation}
$L^2_{\mathrm{cusp}}(X)$ decomposes as a direct sum of irreducible $\GL_2(\BA_F)$-representations which are called the cuspidal automorphic representations. $L^2_{\mathrm{res}}(X)$ is the sum of all one dimensional subrepresentations of $L^2(X)$. Finally the continuous part $L^2_{\mathrm{cont}}(X)$ is a direct integral of irreducible $\GL_2(\BA_F)$-representations and it is expressed via the Eisenstein series. In this paper, we call the irreducible components of $L^2_{\mathrm{cusp}}$ and $L^2_{\mathrm{cont}}$ the unitary automorphic representations. If $\pi$ is a unitary representation appearing in the continuous part, we say that $\pi$ is Eisenstein.

For any ideal $\Fb$ of $\CO_F$, we write $L^2(X,\Fb):= L^2(X)^{K_0(\Fb)}$ for the subspace of level $\Fb$ automorphic forms, which is the closed subspace of functions that are invariant under the subgroup $K_0(\Fb)$.

Recall that if $\pi$ is a cuspidal representation, we have a unitary structure on $\pi$ given by \eqref{L^2normCuspidal}. If $\pi$ belongs to the continuous spectrum and $\varphi$ is the Eisenstein series associated to a section $f : \GL_2(\BA_F) \rightarrow \BC$ in an induced representation of $B(\BA_F)$ (see for example \cite[Section 4.1.6]{subconvexity} for the basic facts and notations concerning Eisenstein series), we can define the norm of $\varphi$ by setting 
$$||\varphi||^2_{\mathrm{Eis}}:= \int_{K}|f(k)|^2dk.$$
We define the canonical norm of $\varphi$ by
\begin{equation}\label{CanonicalNorm}
||\varphi||^2_{\mathrm{can}} := \left\{ \begin{array}{lcl}
||\varphi||_{L^2(X)}^2 & \text{if} & \pi \; \mathrm{is \; cuspidal} \\
 & & \\
2\Lambda_\F^*(1) ||\varphi||_{\mathrm{Eis}}^2 & \text{if} & \pi \; \mathrm{is \; Eisenstein},
\end{array}\right.
\end{equation}
Using \cite[Lemma 2.2.3]{subconvexity}, we can compare the global and the local inner product : for $\varphi=\otimes_v \varphi_v \in \pi=\otimes_v\pi_v$ a pure tensor with $\pi$ either cuspidal or Eisenstein and non-singular, i.e. $\pi=\chi_1\boxplus\chi_2$ with $\chi_i$ unitary, $\chi_1\chi_2=1$ and $\chi_1\neq\chi_2,$ we have
\begin{equation}\label{Comparition}
||\varphi||_{\mathrm{can}}^2=2 \Delta_\F^{1/2} \Lambda^*(1,\pi,\mathrm{Ad})\prod_v \langle \varphi_{v},\varphi_v\rangle_v,
\end{equation}
where $\Lambda(s,\pi,\mathrm{Ad})$ is the complete adjoint $L$-function $\prod_v L(s,\pi,\mathrm{Ad})$ and $\Lambda^*(1,\pi,\mathrm{Ad})$ is the first nonvanishing coefficient in the Laurent expansion around $s=1$.
This regularized value satisfies \cite{adjoint}
\begin{equation}\label{BoundAdjoint}
\Lambda^*(1,\pi,\mathrm{Ad})=\mathrm{C}(\pi)^{o(1)}, \; \; \mathrm{as} \; \mathrm{C}(\pi)\rightarrow \infty,
\end{equation}
where $\mathrm{C}(\pi)$ is the analytic conductor of $\pi$, as defined in \cite[Section 1.1]{subconvexity}.

\section{Integral representations of triple product $L$-functions}\label{SectionRankin} 

Let $\pi_1,\pi_2,\pi_3$ be three unitary automorphic representations of $\PGL_2(\BA_F)$ such that at least one of them is cuspidal, say $\pi_2.$ We consider the linear functional on $\pi_1\otimes\pi_2\otimes\pi_3$ defined by
$$I (\varphi_1\otimes\varphi_2\otimes \varphi_3):= \int_X \varphi_1(g)\varphi_2(g)\varphi_3(g)dg.$$
This period is closely related to the central value of the triple product $L$-function $L(\tfrac{1}{2}, \pi_1\otimes\pi_2\otimes\pi_3)$. In order to state the result, we write $\pi_i=\otimes_v \pi_{i,v}$ and for each $v$, we can consider the matrix coefficient
\begin{equation}\label{DefinitionMatrixCoefficient}
I'_v(\varphi_{1,v}\otimes\varphi_{2,v}\otimes\varphi_{3,v}) :=\int_{\PGL_2(\F_v)}\prod_{i=1}^3\langle \pi_{i,v}(g_v)\varphi_{i,v},\varphi_{i,v}\rangle_v dg_v. 
\end{equation}
It is a fact that \cite[(3.27)]{subconvexity} 
\begin{equation}\label{Fact}
\frac{I'(\varphi_{1,v}\otimes\varphi_{2,v}\otimes\varphi_{3,v})}{\prod_{i=1}^3 \langle \varphi_{i,v},\varphi_{i,v}\rangle_v}= \zeta_{F_v}(2)^2 \frac{L(\tfrac{1}{2}, \pi_{1,v}\otimes\pi_{2,v}\otimes\pi_{3,v})}{\prod_{i=1}^3 L(1,\pi_{i,v},\mathrm{Ad})},
\end{equation}
when $v$ is non-Archimedean and all vectors are unramified. It is therefore natural to consider the normalized version
\begin{equation}\label{DefinitionNormalizedMatrixCoefficient}
I_v(\varphi_{1,v}\otimes \varphi_{2,v}\otimes \varphi_{3,v}) := \zeta_{\F_v}(2)^{-2} \frac{\prod_{i=1}^3 L(1,\pi_{i,v},\mathrm{Ad})}{L(\tfrac{1}{2}, \pi_{1,v}\otimes\pi_{2,v}\otimes\pi_{3,v})} I'_v (\varphi_{1,v}\otimes \varphi_{2,v},\varphi_{3,v}).
\end{equation}
The following proposition connects the global trilinear form $I$ with the central value $L(\tfrac{1}{2}, \pi_1\otimes\pi_2\otimes\pi_3)$ and the local matrix coefficients $I_v$. The proof when at least one of the $\pi_i$'s is Eisenstein can be found in \cite[Equation (4.21)]{subconvexity} and is a consequence of the Rankin-Selberg method. The result when all $\pi_i$ are cuspidal is due to Ichino \cite{ichino}.

\begin{prop}\label{PropositionIntegralRepresentation} 
Let $\pi_1,\pi_2,\pi_3$ be unitary automorphic representations of $\PGL_2(\BA_F)$ such that at least one of them is cuspidal. Let $\varphi_i = \otimes_v \varphi_{i,v}\in \otimes_v \pi_{i,v}$ be pure tensors and set $\varphi :=\varphi_1\otimes\varphi_2\otimes\varphi_3$.

\begin{enumerate}
\item If none of the $\pi_i$'s ($i=1,2,3$) is a singular Eisenstein series, then
$$
\frac{|I(\varphi)|^2}{\prod_{i=1}^3 ||\varphi_i||^2_{\mathrm{can}}} = \frac{C}{8\Delta_F^{3/2}}\cdot\frac{\Lambda(\tfrac{1}{2}, \pi_1\otimes\pi_2\otimes\pi_3)}{\prod_{i=1}^3 \Lambda^*(1, \pi_i,\mathrm{Ad})}\prod_v \frac{I_v(\varphi_v)}{\prod_{i=1}^3\langle \varphi_{i,v},\varphi_{i,v}\rangle_v},
$$
with $C=\Lambda_F(2)$ if all $\pi_i$ are cuspidal and $C=1$ if at least one $\pi_i$ is Eisenstein and non-singular.

\item Assume that $\pi_3=1 \boxplus 1$ and let $\varphi_3$ be the Eisenstein associated to the section $f_3(0) \in 1 \boxplus 1$ which for $\Re(s)>0$, is defined as follows:
$$f_3(g,s):= \vert \det(g) \rvert^s \cdot \int_{\BA_F^{\times}} \Phi((0,t)g) \vert t \rvert^{1+2s} d^{\times} t \in \vert \cdot \rvert^s \boxplus \vert \cdot \rvert^{-s},$$
where $\Phi=\prod_v \Phi_v$ and $\Phi_v=1_{\CO_{F_v}}^2$ for finite $v$. Then we have
$$
\frac{|I(\varphi)|^2}{\prod_{i=1}^2 ||\varphi_i||^2_{\mathrm{can}}} = \frac{1}{4 \Delta_F}\cdot\frac{\Lambda(\tfrac{1}{2}, \pi_1\otimes\pi_2\otimes\pi_3)}{\prod_{i=1}^2 \Lambda^*(1, \pi_i,\mathrm{Ad})}\prod_v \frac{I_v(\varphi_v)}{\prod_{i=1}^3\langle \varphi_{i,v},\varphi_{i,v}\rangle_v}.
$$
\end{enumerate}

\end{prop}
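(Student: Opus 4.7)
The plan is to treat the three sub-cases separately—(a) all three $\pi_i$ cuspidal, (b) at least one $\pi_i$ a non-singular Eisenstein series, and (c) $\pi_3 = 1 \boxplus 1$—since each requires a different integral representation, though all exploit an Euler factorization of the triple product period.

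For case (a), I would invoke Ichino's formula directly. The input is local multiplicity one for invariant trilinear forms on $\PGL_2(F_v)$ (Prasad), which forces the two $\GL_2(\BA_F)$-invariant Hermitian forms $\varphi \mapsto |I(\varphi)|^2$ and $\varphi \mapsto \prod_v I'_v(\varphi_v)/\prod_i \langle \varphi_{i,v}, \varphi_{i,v} \rangle_v$ on $\pi_1 \otimes \pi_2 \otimes \pi_3$ to be proportional. The constant is pinned down by choosing a global test vector that is spherical at all unramified places and applying the unramified identity \eqref{Fact} together with the canonical-to-local norm comparison \eqref{Comparition}; after accounting for $\Delta_F^{3/2}$ from the discriminant, the three adjoint $L$-factors, and $\Lambda_F(2)$ from the Tamagawa-number normalization, one recovers $C = \Lambda_F(2)$.

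For case (b), say $\pi_3 = \chi \boxplus \chi^{-1}$ with $\chi^2 \neq 1$, I would apply Rankin-Selberg unfolding: realize $\varphi_3 = E(\cdot, 0, f_3)$ as an Eisenstein series associated to a flat section $f_3$, substitute into $I(\varphi)$, and unfold the sum over $B(F) \backslash \GL_2(F)$ against the quotient $X$. Using the Fourier expansion \eqref{FourierSeries} of, say, the cuspidal $\varphi_2$, the result collapses to a Whittaker period
\[
\int_{N(\BA_F) \backslash \GL_2(\BA_F)} W_{\varphi_1}(g)\, W_{\varphi_2}(g)\, f_3(g, 0)\, dg
\]
which factorizes over places; each local factor is a standard $\GL_2 \times \GL_2 \times \GL_2$ Rankin-Selberg zeta integral, whose unramified computation recovers $L(\tfrac12, \pi_{1,v} \otimes \pi_{2,v} \otimes \pi_{3,v})$ and, after normalization as in \eqref{DefinitionNormalizedMatrixCoefficient}, produces $I_v(\varphi_v)/\prod_i \langle \varphi_{i,v}, \varphi_{i,v} \rangle_v$. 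This is precisely the Michel-Venkatesh computation [subconvexity, §4.1], and the absence of an extra residue explains $C=1$.

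For case (c), the Eisenstein section $f_3(\cdot,s)$ is of theta type, built from the Schwartz function $\Phi$, and satisfies $f_3(g,s) = |\det g|^s \int_{\BA_F^\times} \Phi((0,t)g)|t|^{1+2s} d^\times t$. The key identity $L(s,\pi_1 \otimes \pi_2 \otimes (1 \boxplus 1)) = L(s,\pi_1 \otimes \pi_2)^2$ and the classical observation that the associated Eisenstein series is a theta lift let me unfold in a Jacquet-style manner: for $\Re(s)$ large, $\int_X \varphi_1 \varphi_2\, E(\cdot,s,f_3)\, dg$ unfolds to a Rankin-Selberg integral of the form
\[
\int_{Z(\BA_F)\GL_2(\BA_F)} \varphi_1(g)\, \overline{\varphi_2}(g)\, |\det g|^s \sum_{\xi \in F^2 \setminus \{0\}} \Phi(\xi g)\, d^\times(\ldots)\, dg,
\]
which is a Jacquet-Piatetski-Shapiro-Shalika integral computing $L(\tfrac12 + s, \pi_1 \otimes \pi_2) L(\tfrac12 - s, \pi_1 \otimes \pi_2)$ up to local factors, then specialized to $s = 0$. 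The absence of the $\Lambda^*(1, \pi_3, \mathrm{Ad})$ factor in the statement (and the modified prefactor $1/(4 \Delta_F)$ replacing $\Lambda_F(2)/(8\Delta_F^{3/2})$) reflects precisely the fact that $\pi_3 = 1 \boxplus 1$ is reducible and one does not divide by the adjoint $L$-value at this singular point.

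The main obstacle I expect is case (c): unlike case (b), the Eisenstein series for the trivial inducing data is not holomorphic at $s=1/2$ and the unfolded integral requires careful analytic continuation through the convergence region. I would handle this by first establishing the factorization for $\Re(s) \gg 0$, where the unfolding is absolutely convergent, then continuing both sides meromorphically in $s$ and evaluating at $s = 0$, verifying that the choice $\Phi_v = 1_{\CO_{F_v}^2}$ at finite places reproduces the constant $1/(4\Delta_F)$ from the local volume computations (the discriminant factor arising from the self-dual measure on $\BA_F^2$).
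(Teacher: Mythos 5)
Your proposal matches the paper's approach: the paper itself gives no independent proof of this proposition, citing Ichino's trilinear form theorem for the fully cuspidal case and the Rankin--Selberg computation of Michel--Venkatesh \cite[Eq.\ (4.21)]{subconvexity} for the Eisenstein cases, which is exactly the two-pronged strategy you outline. Your sketch of the unfolding in cases (b) and (c), including the use of the theta-type section built from $\Phi$ and the factorization of the triple product $L$-function as a product of Rankin--Selberg $L$-values, is a correct elaboration of what is in those references; the only minor imprecision is that the unfolded integral $I(\varphi)$ produces a single Rankin--Selberg $L$-value $L(\tfrac12+s,\pi_1\otimes\pi_2)$ (not the product of both), and it is only after taking $|I(\varphi)|^2$ and specializing to $s=0$ that one recovers $L(\tfrac12,\pi_1\otimes\pi_2\otimes(1\boxplus 1))=L(\tfrac12,\pi_1\otimes\pi_2)^2$.
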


\subsection{Hecke operators} \noindent Let $\Fp$ be a prime ideal of $\CO_F$ of norm $p$ and $n\in\mathbb{N}$. Let $F_p$ be the completion of the number field $F$ at the place corresponding to the prime ideal $F_{\Fp}$ and $\varpi_{\Fp}$ be a uniformizer of the ring of integer $\CO_{F_{\Fp}}$. Let $\mathrm{H}_{\Fp^n}$ be the double coset in $\GL_2(F_{\Fp})$ with
$$\mathrm{H}_{\Fp^n}:= \GL_2(\CO_{F_{\Fp}}) \begin{pmatrix}1 &  \\  & \varpi_{\Fp^n}
\end{pmatrix} \GL_2(\CO_{F_{\Fp}}),$$
which, for $n\geqslant 1$, has measure $p^{n-1}(p+1)$ with respect to the Haar measure on $\GL_2(F_\Fp)$ assigning mass $1$ to the maximal open compact subgroup $\GL_2(\CO_{F_\Fp})$ (See \cite[Section 2.8]{sparse}). We consider the compactly supported function:
$$\mu_{\Fp^n}:= \frac{1}{p^{n/2}}\sum_{0\leqslant k\leqslant \frac{n}{2}}\mathbf{1}_{\mathrm{H}_{\Fp^{n-2k}}}.$$
Now for any $f\in \mathscr{C}^\infty(\GL_2(\BA_F))$, the Hecke operator $\Trm_{\Fp^n}$ is given by the convolution of $f$ with $\mu_{\Fp^n}$, i.e. for any $g \in \GL_2(\BA_F)$,
\begin{equation}\label{ActionHecke}
(\Trm_{\Fp^n} f)(g) = (f\star \mu_{\Fp^n})(g):= \int_{\GL_2(F_{\Fp})}f(gh)\mu_{\Fp^n}(h)dh,
\end{equation} 
and the function $h\mapsto f(gh)$ has to be understood under the natural inclusion $\GL_2(F_{\Fp})\hookrightarrow \GL_2(\BA_F)$. This definition extends to an arbitrary integral ideal $\Fa$ by multiplicativity of Hecke operators.


This abstract definition of Hecke operators has a lot of advantages. It simplifies a lot when we deal with $\GL_2(\BA_F)$-invariant functionals. Indeed, consider the natural action of $\GL_2(\BA_F)$ on $\Cscr^\infty(\GL_2(\BA_F))$ by right translation and let $\ell : \Cscr^\infty(\GL_2(\BA_F))\times \Cscr^\infty(\GL_2(\BA_F))\rightarrow \BC$ be a $\GL_2(\BA_F)$-invariant bilinear functional. Then for any $f_1,f_2$ which are right $\GL_2(\CO_{F_{\Fp}})$-invariant, we have the relation
\begin{equation}\label{RelationHecke}
\ell(\Trm_{\Fp^n}f_1,f_2)= \frac{1}{p^{n/2}}\sum_{0\leqslant k\leqslant \frac{n}{2}} \gamma_{n-2k} \cdot \ell\left(\begin{pmatrix} 1 & \\ & \varpi^{n-2k}\end{pmatrix} \cdot f_1,f_2 \right),
\end{equation}
with

\begin{equation}\label{ValueGamma}
\gamma_r:= \left\{ \begin{array}{lcl}
1 & \ifm & r=0 \\ 
 & & \\
p^{r-1}(p+1) & \ifm & r\geqslant 1. 
\end{array}\right.
\end{equation}

\section{Estimations of some period integrals} \label{SectionEstimation}

We recall that in Section \ref{intro}, $\pi_1,\pi_2$ are two fixed unitary $\theta_i$-tempered ($i=1,2$) cuspidal automorphic representations with trivial central character and finite coprime conductor $\mathfrak {u}$ and $\mathfrak {v}$. Let $\varphi_i=\otimes_v \varphi_{i,v}\in \pi_i=\otimes_v \pi_{i,v}$ be pure tensor vectors defined as follows: Since $\pi_1$ and $\pi_2$ are cuspidal, we fix a unitary structure $\langle \cdot,\cdot\rangle_{i,v}$ on each $\pi_{i,v}$ compatible with \eqref{NormalizedInnerProduct} as in previous Section \ref{pre} and take $\varphi_{i,v}$ to be newvectors and have normalized norm $1$. 

Let $\Fl$ be an integral ideal of $\CO_F$ which is coprime to $\Fu,\Fv,\Fq$. From the multiplicativity of the Hecke operators, without loss of generality, we take $\Fl$ of the form $\Fp^n$ with $\Fp\in\mathrm{Spec}(\CO_F)$ and $n\in\mathbb{N}$ and set $p$ for the norm of $\Fp$, so that $\ell=p^n$ is the norm of $\Fl$. For $0\leqslant r\leqslant n$, we write as usual 
$$\varphi_i^{\Fp^r} := \begin{pmatrix} 1 & \\ & \varpi_\Fp^r \end{pmatrix} \cdot \varphi_i.$$ 

\begin{rmk}\label{RemarkInfinite} 
Observe that for every finite place $v$, our local vectors $\varphi_{i,v}$ are uniquely determined since they are normalized newvectors. Indeed if $\pi_i$ ($i=1,2$) is cuspidal, there is a unique L$^2$-normalized new-vector in $\pi_v$. We allow here the infinite component $\varphi_{i,\infty}$ ($i=1,2$) to have a certain degree of freedom. In fact, these will be chosen for Theorem \ref{subconvex1} and \ref{subconvex2} in Section \ref{SectionInterlude} (See Proposition \ref{Hyp}) and will depend on $\pi_{1,\infty}, \pi_{2,\infty}, \pi_{3,\infty}$. Here $\pi_3$ is the automorphic representation which we want to obtain subconvexity bound in Section \ref{SectionSub}. We make therefore the convention that all $\ll$ involved in the following sections depend implicitly on $\varphi_{1,\infty}$ and $\varphi_{2,\infty}$. We can also fix them by choosing for each infinite place $v$, $\varphi_{i,v}\in \pi_{i,v}$ to be the unique normalized vector of minimal weight for $i=1,2$. 
\end{rmk}

\subsection{A $\mathrm{L}^2$-norm}\label{SectionL^2}

We have the following quick upper bound for $\mathrm{L}^2$-norm.

\begin{prop}\label{PropositionL2norm} 
For any real number $\varepsilon>0$, we have the following estimation
$$\int_{X} \left| \varphi_1\varphi_2^{\Fl}\right|^2 \ll_{\pi_1,\pi_2,\varepsilon} \ell^{\varepsilon}.$$
\end{prop}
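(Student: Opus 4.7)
The plan is a direct Cauchy-Schwarz decoupling followed by exploiting the right-invariance of the adelic Haar measure on $X$. Since $|\varphi_1\varphi_2^{\Fl}|^2 = |\varphi_1|^2 \cdot |\varphi_2^{\Fl}|^2$ is a product of two non-negative functions on $X$, the Cauchy-Schwarz inequality immediately gives
$$\int_X \bigl|\varphi_1\varphi_2^{\Fl}\bigr|^2\,dg \;\leqslant\; \|\varphi_1\|_{L^4(X)}^2 \cdot \|\varphi_2^{\Fl}\|_{L^4(X)}^2.$$

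Next, $\varphi_2^{\Fl}$ is by definition the right translate of $\varphi_2$ by the diagonal adelic element $\bigl(\begin{smallmatrix} 1 & \\ & \varpi_\Fp^n \end{smallmatrix}\bigr)\in\GL_2(\BA_F)$, and the quotient measure $dg$ on $X = Z(\BA_F)\GL_2(F)\setminus\GL_2(\BA_F)$ is right $\GL_2(\BA_F)$-invariant. Therefore $\|\varphi_2^{\Fl}\|_{L^p(X)} = \|\varphi_2\|_{L^p(X)}$ for every $p\geqslant 1$, and in particular the right-hand side above is independent of $\Fl$.

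Finally, since $\pi_1,\pi_2$ are fixed unitary cuspidal representations and each $\varphi_i$ is a normalized pure-tensor newvector (with archimedean components fixed as in Remark~\ref{RemarkInfinite}), the functions $\varphi_i$ are bounded and of rapid decay on $X$, so $\|\varphi_i\|_{L^4(X)}$ is a finite constant depending only on $\pi_i$. Combining the three steps yields $\int_X |\varphi_1\varphi_2^{\Fl}|^2\,dg \ll_{\pi_1,\pi_2} 1$, which is in fact stronger than the claimed bound $\ell^\varepsilon$. There is essentially no obstacle here; the only mild point is the finiteness of $\|\varphi_i\|_{L^4(X)}$ for cuspidal newvectors, which is classical (rapid decay of cusp forms, or equivalently Sobolev embedding on the automorphic quotient, making all $L^p$ norms finite for fixed $\pi_i$). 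The $\ell^\varepsilon$-loss in the statement appears to be cosmetic, chosen so that this estimate slots naturally into the other $\ell^\varepsilon$-shaped bounds invoked later in the amplification argument.
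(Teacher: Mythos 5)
Your proof is correct and is essentially the same argument as the paper's: the paper also applies Cauchy–Schwarz to obtain the bound $\|\varphi_1\|_{L^4}^2\|\varphi_2\|_{L^4}^2$ (implicitly using right-invariance of $dg$ to drop the Hecke translate, which you spell out explicitly), and then notes this is an $\ell$-independent constant. Your observation that the true bound is $O_{\pi_1,\pi_2}(1)$ rather than $\ell^\varepsilon$ is accurate; the paper states it as $\ll\ell^\varepsilon$ purely for uniformity with the other estimates in that section.
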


\begin{proof}
Since $\pi_1$ and $\pi_2$ are both cuspidal, applying Cauchy-Schwarz inequality we get the following bound: $||\varphi_1||_{L^4}^2||\varphi_2||^2_{L^4} \ll_{\pi_1,\pi_2,\varepsilon} \ell^{\varepsilon}$.
\end{proof}

\subsection{A generic term}

Now we gives the following estimation for a particular generic expansion from the spectral decomposition:
\begin{prop}\label{PropositionGeneric} 
For any real number $\varepsilon>0$. Then the generic expansion
\begin{equation}\label{GenericExpansion}
\begin{split}
 & \sum_{\substack{\pi \ \mathrm{cuspidal} \\ \crm(\pi)| \Fl}}\sum_{\psi\in\CB(\pi,\Fl)} \left|\langle \varphi_1\varphi_2^{\Fl},\psi\rangle \right|^2 \\ 
 + & \ \sum_{\substack{\chi\in\widehat{ F^\times\setminus\BA_F^{1}} \\ \crm(\chi)^2|\Fl}}\int_{-\infty}^\infty\sum_{\psi_{it}\in\CB(\chi,\chi^{-1},it,\Fl)}\left|\langle\varphi_1\varphi_2^{\Fl},\Erm(\psi_{it})\rangle \right|^2  \frac{dt}{4\pi} ,
\end{split}
\end{equation}
is bounded, up to a constant depending on $\pi_1,\pi_2, F$ and $\varepsilon$, by $\ell^{\varepsilon}$.
\end{prop}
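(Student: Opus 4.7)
The plan is to identify the sum in \eqref{GenericExpansion} as the generic part of the spectral resolution of $\varphi_1\varphi_2^{\Fl}\in L^2(X)$, and to bound it by the $L^2$-norm estimate already provided in Proposition \ref{PropositionL2norm}.

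First I would check that $\varphi_1\varphi_2^{\Fl}$ belongs to $L^2(X)$: since $\pi_1$ and $\pi_2$ are cuspidal, both $\varphi_1$ and $\varphi_2^{\Fl}$ are rapidly decaying on a Siegel domain, so their product lies in $L^2(X)\cap L^\infty(X)$. Next, I would observe that the union
\[
\bigl\{\psi:\psi\in\CB(\pi,\Fl),\ \pi\ \text{cuspidal},\ \crm(\pi)\mid\Fl\bigr\}\ \sqcup\ \bigl\{E(\psi_{it}):\psi_{it}\in\CB(\chi,\chi^{-1},it,\Fl),\ \crm(\chi)^2\mid\Fl,\ t\in\Rbb\bigr\},
\]
with the continuous piece equipped with the Plancherel measure $\tfrac{dt}{4\pi}$, is exactly the full generic (cuspidal plus continuous) part of the spectral decomposition of $L^2(X)^{K_0(\Fl)}$: the conductor divisibility conditions $\crm(\pi)\mid\Fl$ and $\crm(\chi)^2\mid\Fl$ are precisely the condition that $\pi$, respectively the unitary principal series $\chi|\cdot|^{it}\boxplus\chi^{-1}|\cdot|^{-it}$, admits a nontrivial $K_0(\Fl)$-fixed vector. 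The only omitted piece is the residual (one-dimensional) spectrum, which is orthogonal to the generic part. By Bessel's inequality applied to this orthonormal system, one obtains
\[
\eqref{GenericExpansion}\ \leqslant\ \int_X\bigl|\varphi_1(g)\varphi_2^{\Fl}(g)\bigr|^2\,dg.
\]

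Finally, applying Proposition \ref{PropositionL2norm} to the right-hand side gives the desired bound $\ll_{\pi_1,\pi_2,F,\varepsilon}\ell^{\varepsilon}$. The argument is essentially a one-line consequence of Bessel's inequality together with the preceding $L^2$-estimate, so there is no substantial obstacle; the only delicate point is the bookkeeping of normalisations, namely that the bases $\CB(\pi,\Fl)$ and $\CB(\chi,\chi^{-1},it,\Fl)$ together with the measure $\tfrac{dt}{4\pi}$ realise the spectral decomposition of $L^2(X)$ as a genuine isometry with respect to the ambient $L^2$ inner product appearing in \eqref{GenericExpansion}, which is part of the conventions set up in Section \ref{pre}.
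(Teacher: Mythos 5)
Your proposal is correct and follows essentially the same route as the paper: identify \eqref{GenericExpansion} as the generic part of the spectral (Plancherel) decomposition of $\varphi_1\varphi_2^{\Fl}$ in $L^2(X)^{K_0(\Fl)}$ and reduce to the $L^2$-bound of Proposition \ref{PropositionL2norm}. The only cosmetic difference is that you discard the residual (one-dimensional) contribution outright by Bessel's inequality, whereas the paper writes the exact Plancherel identity $\|\varphi_1\varphi_2^{\Fl}\|_{L^2}^2 - V_F^{-1}\sum_{\chi^2=1}|\langle \varphi_1\varphi_2^{\Fl},\varphi_\chi\rangle|^2$ and then bounds the residual term explicitly via the Hecke relation and temperedness (a bound it reuses later in Section \ref{SectionSymmetric}); since that term is non-negative, your shortcut is equally valid for this proposition.
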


\begin{proof}
Since both $\pi_i$ ($i=1,2$) are cuspidal,  the expansion \eqref{GenericExpansion} is equal to 
$$\left|\left|\varphi_1\varphi_2^{\Fl}\right|\right|^2_{L^2}-V_{F}^{-1}\sum_{\chi^2=1}\left| \left\langle \varphi_1\varphi_2^\Fl, \varphi_{\chi}\right\rangle\right|^2,$$
where $\varphi_{\chi}(g):=\chi(\det g)$. The $L^2$-norm is bounded by $\ell^{\varepsilon}$ by Proposition \ref{PropositionL2norm}. The one-dimensional contribution (constant term) is zero if $\pi_1$ is not isomorphic to a quadratic twist of $\pi_2$. Otherwise, there exists at most finite many quadratic characters $\chi$ (depending on the number field $F$) such that $\pi_1\simeq \pi_2\otimes \chi$. The cardinality of such quadratic character $\chi$ is $O_{\varepsilon,F}((uv\ell)^{\varepsilon})$. For such a $\chi$, we have using the Hecke relation identity \eqref{RelationHecke} and the $\theta_2$-temperedness:
\begin{equation}\label{BoundIntegral}
\left| \int_{X}\varphi_1\overline{\varphi}_2^{\Fl}\varphi_\chi\right| \leqslant \zeta_{F_{\Fp}}(1)\frac{n+1}{\ell^{1/2-\theta_2}}||\varphi_1||_{L^2}||\varphi_2||_{L^2} \ll_{\varepsilon, \pi_1, \pi_2} \ell^{-1/2+\theta_2+\varepsilon}.
\end{equation}
Since $0 \leqslant \theta_2 \leqslant \frac{7}{64}$, we prove the result.
\end{proof}

\section{A Symmetric Period}\label{SectionSymmetric}
Let $\pi_1,\pi_2$ and $\varphi_i\in\pi_i$ as in Section \ref{SectionEstimation}. We take $\Fq$ which is an integral ideal of $\CO_F$ and $\Fl$ which is an integral ideal of the form $\Fp^n$ with $n\in\BN$ and $\Fp \in \mathrm{Spec}(\CO_F)$ coprime with $\Fq$. Here the integral ideal $\Fq$ is the same as in Section \ref{intro} and is coprime to $\Fu \Fv$. By the multiplicativity of the Hecke operators, without loss of generality, we write $\Fq= \Fq_1^m$ with $m \in \BN$ and $\Fq_1 \in \mathrm{Spec}(\CO_F)$ coprime with $\Fl$ and $\Fp$. We write $q, q_1, p,\ell$ for the norms of $\Fq, \Fq_1, \Fp$ and $\Fl$ respectively. We also recall that the integral ideal $\Fl$ is coprime to $\Fu\Fv\Fq$. We also adopt the convention that all $\ll$ involved in this section depend implicitly on the infinite datas $\varphi_{1,\infty}$ and $\varphi_{2,\infty}$ (See Remark \ref{RemarkInfinite}). By setting
\begin{equation}\label{DefinitionPhi}
\Phi=\varphi_1\varphi_2^{\Fq},
\end{equation}
we also consider the period as in \cite{raphael2}
\begin{equation}\label{ThePeriod}
\CP_\Fq(\Fl,\Phi,\Phi) := \int_{X} \Trm_{\Fl}(\Phi) \overline{\Phi}=\left\langle \Trm_{\Fl}(\Phi),\Phi \right\rangle.
\end{equation}

\subsection{Expansion in the $\Fq$-aspect}
Since $\pi_1$ and $\pi_2$ are cuspidal automorphic representations, We note that $\Phi_1=\varphi_1\varphi_2^{\Fq}$ is a rapid-decay function which is invariant under the congruence group $\Krm_0(\Fu\Fv \Fq)$, we apply Plancherel formula (See \cite[Theorem 2.8]{raphael2}) to the well-defined inner product \eqref{ThePeriod} in the space of forms of level $\Fm\Fn\Fq$. We have the following decomposition of the considered period
\begin{equation}\label{Expansion1}
\CP_{\Fq}(\Fl,\Phi,\Phi)= \CG_{\Fq}(\Fl,\Phi,\Phi)+ \CC_1,
\end{equation}
where the generic part is given by
\begin{equation}\label{GenericExpansionQ}
\begin{split}
 \CG_{\Fq}(\Fl,\Phi,\Phi)= & \sum_{\substack{\pi \ \mathrm{cuspidal} \\ \crm(\pi)| \Fu \Fv \Fq}}\lambda_\pi(\Fl)\sum_{\psi\in\Bscr(\pi,\Fu\Fv\Fq)} \left|\langle \varphi_1\varphi_2^{\Fq},\psi\rangle \right|^2 \\ 
 + & \ \sum_{\substack{\chi\in\widehat{F^\times\setminus\BA_F^{1}} \\ \crm(\chi)^2 \vert \Fu\Fv\Fq}}\int_{-\infty}^\infty \lambda_{\chi,it}(\Fl)\sum_{\psi_{it}\in\Bscr(\chi,\chi^{-1},it,\Fu\Fv\Fq)}\left|\langle\varphi_1\varphi_2^{\Fq},\Erm(\psi_{it})\rangle \right|^2  \frac{dt}{4\pi}.
\end{split}
\end{equation}
Moreover, the constant term $\CC_1$ is the one-dimensional contribution (constant term) which appears only if both $\pi_1$ and $\pi_2$ are cuspidal and there exists finite many quadratic characters $\chi$ (depending on the number field $F$) of $F^{\times} \bs \BA_F^1$ such that $\pi_1\simeq \pi_2\otimes\chi$ ($\chi=1$ if $\pi_1=\pi_2$ for example). The cardinality of such quadratic character $\chi$ is $O_{\varepsilon,F}((uvq)^{\varepsilon})=O_{\varepsilon,F}(q^{\varepsilon})$.
For such a quadratic character $\chi$, by applying the Hecke relation \eqref{RelationHecke}, we have the following:
\begin{equation}
\begin{aligned}
\CC_1= & \, V_{F}^{-1} \cdot \langle \Trm_{\Fl}(\Phi),\varphi_\chi\rangle \langle \varphi_\chi,\Phi\rangle = \frac{\chi(\Fl)\deg(\Trm_{\Fl})}{V_F} \cdot \left|\left\langle \varphi_1\varphi_2^{\Fq},\varphi_\chi\right\rangle\right|^2 \\
=  & \ \frac{\zeta_{\Fq}(2)^2 \chi(\Fl) \deg(\Trm_{\Fl})}{\zeta_{\Fq}(1)^2 V_ F} \cdot \left| \frac{1}{q^{1/2}} \left( \left\langle \varphi_1 \cdot (\Trm_{\Fq}\varphi_2),\varphi_\chi \right\rangle - \frac{1}{q_1} \cdot \left\langle \varphi_1 \cdot (\Trm_{\Fq_1^{m-2}}\varphi_2),\varphi_\chi \right\rangle\right) \right|^2 \\
=  & \ \frac{\zeta_{\Fq}(2)^2 \chi(\Fl) \deg(\Trm_{\Fl})}{\zeta_{\Fq}(1)^2 V_ F} \cdot \left| \frac{1}{q^{1/2}} \left( \lambda_{\pi_2}(\Fq)-\frac{1}{q_1}\lambda_{\pi_2}\left(\Fq_1^{m-2}\right)\right)  \cdot  \left\langle \varphi_1\varphi_2,\varphi_\chi \right\rangle \right|^2 \\
=  & \ \frac{\zeta_{\Fq}(2)^2 \chi(\Fl) \deg(\Trm_{\Fl})}{q \zeta_{\Fq}(1)^2 V_ F} \cdot \left|  \left( \lambda_{\pi_2}(\Fq)-\frac{1}{q_1}\lambda_{\pi_2}\left(\Fq_1^{m-2}\right)\right) \right |^2 \cdot \left| \left\langle \varphi_1\varphi_2,\varphi_\chi \right\rangle \right|^2,
\end{aligned}
\end{equation}
where $\Trm_{\Fq}$ is the Hecke operator and $\zeta_\Fq(s):=\prod_{v|\Fq}\zeta_{F_v}(s)$ is the partial Dedekind zeta function. If the integral ideal $\Fq$ is squarefree, i.e. $m=1$, we define $\Trm_{\Fq_1^{m-2}}:=0$ and $\lambda_{\pi_2}\left(\Fq_1^{m-2}\right):=0$. Hence, $\Trm_{\Fq_1^{m-2}} \varphi_2=0$. Moreover, the degree of the Hecke operator $\Trm_{\Fl}$ is defined by 
\begin{equation}
\deg(\Trm_{\Fl}) := \frac{1}{\ell^{1/2}} \sum_{0\leqslant k\leqslant \frac{n}{2}}\gamma_{n-2k} = \ell^{1/2} \frac{\zeta_{F_\Fp}(1)}{\zeta_{F_\Fp}(n+1)} \leqslant \ell^{1/2}\zeta_{F_\Fp}(1).
\end{equation}
Since $\pi_2$ is $\theta_2$-tempered at all the finite places, we have $\vert \lambda_{\pi_2}(\Fq) \rvert \leqslant (m+1)\cdot q^{\theta_2}$. Applying Cauchy-Schwartz inequality, 
$$\CC_1 \ll_{\pi_1,\pi_2, F,\varepsilon} (\ell q)^\varepsilon \frac{\ell^{1/2}}{q^{1-2\theta_2}} \ll_{\pi_1,\pi_2, F,\varepsilon} (\ell q)^\varepsilon \frac{\ell^{1/2}}{q^{25/32}}$$
since $0 \leqslant \theta_2 \leqslant \frac{7}{64}$.
Hence we can conclude
\begin{equation}\label{FirstRelation}
\CP_{\Fq}(\Fl,\Phi,\Phi)=\CG_{\Fq}(\Fl,\Phi,\Phi)+O_{\pi_1,\pi_2, F,\varepsilon}\left( (\ell q)^\varepsilon \frac{\ell^{1/2}}{q^{1-2\theta_2}}\right),
\end{equation}
where we recall that $\Phi$ is defined as \eqref{DefinitionPhi}.

\subsection{The symmetric relation} The symmetric relation is obtained by grouping differently the vectors $\varphi_i$ ($i=1,2$): In the period $\CP_{\Fq}(\Fl,\Phi,\Phi)$, we first use the Hecke relation \eqref{RelationHecke} to expand the Hecke operator $\Trm_{\Fl}$. Secondly we do the same, but on the reverse way, for the translation by the matrix $\left(\begin{smallmatrix} 1 & \\ & \varpi_{\Fq_1^m} \end{smallmatrix}\right)$. Therefore, this time the Hecke operator $\Trm_{\Fq_1^m}= \Trm_{\Fq}$ appears. We thus obtain the following symmetric relation:
\begin{equation}  \label{Symmetry1}
q^{\frac{1}{2}} \frac{\zeta_\Fq(1)}{\zeta_\Fq(2)}\cdot  \CP_\Fq(\Fl,\Phi,\Phi) = \frac{1}{\ell^{1/2}} \cdot  \sum_{0\leqslant k\leqslant \frac{n}{2}}\gamma_{n-2k}\cdot \left(  \CP_{\Fp^{n-2k}}(\Fq,\Psi_1,\Psi_2)- \frac{1}{q_1} \cdot \CP_{\Fp^{n-2k}}(\Fq_1^{m-2},\Psi_1,\Psi_2)  \right),
\end{equation}
where we simply define $\CP_{\Fp^{n-2k}}(\Fq_1^{m-2},\Psi_1,\Psi_2):=0$ if the integral ideal $\Fq$ is squarefree, i.e. $m=1$. Moreover, 
\begin{equation}\label{Psi}
\Psi_1= \overline{\varphi}_1\varphi_1^{\Fp^{v-2k}} \ \ \mathrm{and} \ \ \Psi_2= \varphi_2\overline{\varphi}_2^{\Fp^{v-2k}}.
\end{equation}
Here Equation \ref{Symmetry1} is a generalization of \cite[Equation 4.9]{raphael2}.

We consider the period $\CP_{\Fp^{n-2k}}(\Fq,\Psi_1,\Psi_2)$ in \eqref{Symmetry1}. The period $\CP_{\Fp^{n-2k}}(\Fq_1^{m-2},\Psi_1,\Psi_2)$ on the right hand side of \eqref{Symmetry1} can be estimated in a similar way and is dominated by the period $\CP_{\Fp^{n-2k}}(\Fq,\Psi_1,\Psi_2)$. We note that the period $\CP_{\Fp^{n-2k}}(\Fq,\Psi_1,\Psi_2)$ admit a similar expansion as \eqref{Expansion1}, but this time over automorphic representations of conductor dividing $\Fp^{n-2k}$. This is the phenomenon of the spectral reciprocity formula. We get a close and interesting relation between different type of first moment of $L$-functions with different spectral length. Hence, we have the following spectral decomposition:
$$\CP_{\Fp^{n-2k}}(\Fq,\Psi_1,\Psi_2)=\CG_{\Fp^{n-2k}}(\Fq,\Psi_1,\Psi_2)+ \mathscr{C}_2(k),$$
where $\CG_{\Fp^{n-2k}}(\Fq,\Psi_1,\Psi_2)$ is the generic part and $\mathscr{C}_2(k)$ is the one-dimensional contribution (constant term).

By definition, we have

\begin{equation}\label{GenericExpansionP}
\begin{split}
  & \CG_{\Fp^{n-2k}}(\Fq,\Psi_1,\Psi_2) := \sum_{\substack{\pi \ \mathrm{cuspidal} \\ \crm(\pi)| \Fp^{n-2k}}}\lambda_\pi(\Fq)\sum_{\psi\in\Bscr(\pi,\Fp^{n-2k})} \langle \Psi_1,\psi\rangle\langle\psi, \Psi_2\rangle \\ 
 + & \; \sum_{\substack{\chi\in\widehat{F^\times\setminus\BA_F^{1}} \\ \crm(\chi)^2|\Fp^{n-2k}}}\int_{-\infty}^\infty \lambda_{\chi,it}(\Fq)\sum_{\psi_{it}\in\Bscr(\chi,\chi^{-1},it,\Fp^{n-2k})}\langle\Psi_1,\Erm(\psi_{it})\rangle \langle\Erm(\psi_{it}),\Psi_2\rangle \frac{dt}{4\pi}.
\end{split}
\end{equation}
Since the automorphic representation $\pi_1$ and $\pi_2$ are cuspidal, the constant term $\mathscr{C}_2(k)$ is bounded by
\begin{equation}\label{ConstantC2}
\begin{aligned}
\vert \mathscr{C}_2(k) \rvert & \leqslant \sum_{\chi} \vert \chi(\Fq) \rvert \frac{\deg(\Trm_{\Fq})\zeta_\Fq(1))}{V_F\zeta_\Fq(2)} \cdot \prod_{i=1}^2\left \vert \int_{X}\varphi_i\overline{\varphi}_i^{\Fp^{v-2k}} \varphi_{i, \chi} \right \rvert \\ 
& \leqslant \sum_{\chi}  V_{F}^{-1}q^{1/2}\frac{\zeta_\Fq(1)^2}{\zeta_\Fq(2)}\prod_{i=1}^2\left \vert \int_{X}\varphi_i\overline{\varphi}_i^{\Fp^{v-2k}} \varphi_{i, \chi} \right \rvert,
\end{aligned}
\end{equation}
where the degree of the Hecke operator $\Trm_{\Fq}$ is defined by 
\begin{equation}
\deg(\Trm_{\Fq}) := \frac{1}{q^{1/2}} \sum_{0\leqslant k\leqslant \frac{m}{2}}\gamma_{m-2k} = q^{1/2} \frac{\zeta_{F_\Fp}(1)}{\zeta_{F_\Fp}(m+1)} \leqslant q^{1/2}\zeta_{F_\Fp}(1),
\end{equation}
since $\zeta_{F_\Fp}(m+1) > 1$.
Moreover, the summation is over quadratic Hecke character $\chi$ satisfying $\pi_1 \cong \pi_1 \otimes \chi$ and $\pi_2 \cong \pi_2 \otimes \chi$ and $\varphi_{\chi}(g)=\chi(\det g)$. We note that the cardinality of such quadratic character $\chi$ is finite (depending on the number field $F$), hence is bounded by $O_{\varepsilon,F}((uv\ell)^{\varepsilon}$.
For such a quadratic character $\chi$, applying Hecke relation identity \eqref{RelationHecke} (See also Equation \ref{BoundIntegral}), for $i=1,2$ we have
$$\left| \int_{X}\varphi_i\overline{\varphi}_i^{\Fp^{n-2k}} \varphi_{i, \chi} \right|\leqslant \zeta_{F_\Fp}(1)\frac{n-2k+1}{p^{\frac{n-2k}{2}(1-2\theta_i)}}||\varphi_i||_{L^2}^2 \Longrightarrow \Cscr_2(k)\ll_{\varepsilon,\pi_1,\pi_2, F}(q\ell)^\varepsilon \frac{q^{1/2}}{p^{(n-2k)(1-\theta_1-\theta_2)}}.$$
Similarly, the constant term $\Cscr_3(k)$ in the period $\CP_{\Fp^{n-2k}}(\Fq_1^{m-2},\Psi_1,\Psi_2)$ can be bounded by $(uvq\ell)^\varepsilon \cdot q_1^{(m-2)/2}/p^{(n-2k)(1-\theta_1-\theta_2)}$. The generic term is almost the same as \eqref{GenericExpansionP} by substituting the integral ideal $\Fq$ to $\Fq_1^{m-2}$.

Now, the total constant term is obtained after summing over $0\leqslant k\leqslant n/2$ as in \eqref{Symmetry1}, i.e.
\begin{equation}\label{GlobalConstantTerm}
\Cscr_2:= \frac{1}{\ell^{1/2}}\sum_{0\leqslant k\leqslant \frac{n}{2}}\gamma_{n-2k} \cdot \left(\Cscr_2(k)-\frac{1}{q_1} \cdot \Cscr_3(k) \right)
\end{equation}
with the following upper bound
\begin{equation}\label{BoundConstantTerm}
\mathscr{C}_2 \ll_{\varepsilon, \pi_1,\pi_2, F} (q \ell)^\varepsilon \cdot \frac{q^{1/2}}{\ell^{1/2-\theta_1-\theta_2}}.
\end{equation}
From the above discussion, we have the following spectral reciprocity relation between the two generic parts:
\begin{equation}\label{ReciprocityRelation}
\begin{aligned}
q^{1/2}\frac{\zeta_\Fq(1)}{\zeta_\Fq(2)} \vert \CG_{\Fq}(\Fl,\Phi,\Phi) \rvert & \leqslant  \frac{1}{\ell ^{1/2}}\sum_{0\leqslant k\leqslant \frac{n}{2}}\gamma_{n-2k} \cdot \vert \CG_{\Fp^{n-2k}}(\Fq,\Psi_1,\Psi_2) \rvert + \Cscr_2 \\
& +\frac{1}{\ell ^{1/2}}\sum_{0\leqslant k\leqslant \frac{n}{2}}\gamma_{n-2k} \cdot \frac{1}{q_1} \cdot \vert \CG_{\Fp^{n-2k}}(\Fq_1^{m-2},\Psi_1,\Psi_2) \rvert + O_{\pi_1,\pi_2, F,\varepsilon}\left( (q\ell)^{\varepsilon} \frac{\ell^{1/2}}{q^{1/2-2\theta_2}} \right).
\end{aligned}
\end{equation}
Now we have to bound the geometric sum $\CG_{\Fp^{n-2k}}(\Fq,\Psi_1,\Psi_2)$. The estimation of the geometric sum $\CG_{\Fp^{n-2k}}(\Fq_1^{m-2},\Psi_1,\Psi_2)$ is almost the same as $\CG_{\Fp^{n-2k}}(\Fq,\Psi_1,\Psi_2)$ and will give a similar bound. Finally, we can estimate the generic terms on the righthand side simply using the bound $\vert \lambda_\pi(\Fq) \rvert 
 \leqslant \tau(\Fq)q^{\theta}$, Cauchy-Schwartz inequality and Proposition \ref{PropositionGeneric}, obtaining 
\begin{equation}\label{FinalBound1}
\frac{1}{\ell^{1/2}} \cdot \sum_{0\leqslant k\leqslant \frac{n}{2}}\gamma_{n-2k} \cdot \CG_{\Fp^{n-2k}}(\Fq,\Psi_1,\Psi_2) \ll_{\pi_1,\pi_2, F,\varepsilon} (\ell q)^\varepsilon \cdot \ell^{1/2} \cdot q^{\theta}.
\end{equation}
Here the real number $\theta$ is the best exponent toward the Ramanujan-Petersson Conjecture for $\GL(2)$ over the number field $F$, we have $0 \leqslant \theta \leqslant  \frac{7}{64}.$
Similarly, we have
\begin{equation}\label{FinalBound2}
\frac{1}{\ell^{1/2}} \cdot \sum_{0\leqslant k\leqslant \frac{n}{2}}\gamma_{n-2k} \cdot \CG_{\Fp^{n-2k}}(\Fq_1^{m-2},\Psi_1,\Psi_2) \ll_{\pi_1,\pi_2, F,\varepsilon} (\ell q)^\varepsilon \cdot \ell^{1/2} \cdot q^{\theta}.
\end{equation}
Since $\frac{1}{2}-2\theta_2>0$, we can rewrite Equation \eqref{ReciprocityRelation} as follows:
\begin{equation} \label{relation2}
q^{1/2}\frac{\zeta_\Fq(1)}{\zeta_\Fq(2)} \vert \CG_{\Fq}(\Fl,\Phi,\Phi) \rvert  \ll_{\varepsilon,F,\pi_1,\pi_2} \Cscr_2+ (\ell q )^{\varepsilon} \ell^{1/2}q^{\theta} \ll_{\varepsilon,F,\pi_1,\pi_2} (\ell q )^{\varepsilon} \cdot \left( \frac{q^{1/2}}{\ell^{1/2-\theta_1-\theta_2}}+ \ell^{1/2}q^{\theta} \right).
\end{equation}

\subsection{Connection with the triple product}\label{Connection}
\noindent We connect in this section the expansion \eqref{GenericExpansionQ} with a first moment of the triple product $L(\tfrac{1}{2}, \pi\otimes\pi_1\otimes\pi_2)$ over automorphic representations $\pi$ of conductor dividing $\Fu\Fv\Fq$. For such a representation $\pi$, we define
\begin{equation}\label{Lscr}
\CL(\pi,\Fq) :=\sum_{\psi \in \CB(\pi,\Fu\Fv\Fq)}\left| \langle \varphi_1\varphi_2^{\Fq},\psi\rangle \right|^2,
\end{equation}
where we recall that $\CB(\pi,\Fu\Fv\Fq)$ is an orthonormal basis of the space of $\Krm_0(\Fu\Fv\Fq)$-vectors in $\pi$. 
By Proposition \ref{PropositionIntegralRepresentation} and Definition \eqref{CanonicalNorm} of the canonical norm, we have
\begin{equation}\label{FactorizationLcal}
\Lscr(\pi,\Fq)=\frac{C}{2\Delta_F^{1/2}}f(\pi_\infty)\frac{L(\tfrac{1}{2}, \pi\otimes\pi_1\otimes\pi_2)}{\Lambda^*(1, \pi,\mathrm{Ad})} \ell(\pi,\Fq),
\end{equation}
where the constant $C=2 \Lambda_F(2)$. If we identify $\pi\simeq \otimes_v\pi_v$, then $\ell(\pi,\Fq)=\prod_{v|\Fu\Fv \Fq}\ell_v$ and the local factors $\ell_v$ are given by the summation of the local triple product integrals in Proposition \ref{PropositionIntegralRepresentation} over an orthonormal basis $\CB(\pi,\Fu\Fv\Fq)$. We define the weight function $$H(\pi,\Fq):= \frac{\ell(\pi,\Fq)}{2\Delta_F^{1/2}}.$$

For the finite place $v|\Fq$, by 
\cite[Theorem 4.1]{hu}, if $\crm(\pi_v)=m \geqslant 1$, we have $\prod_{v \vert \Fq} \ell_v \asymp \frac{1}{q}$. If $0 \leqslant \crm(\pi_v) \leqslant m-1$, by definition, it is known that $\ell_v \geqslant 0$.

If the finite place $v \vert \Fu\Fv$, since $\Fu,\Fv$ are coprime and the corresponding norms $u,v$ are positive absolutely bounded integers, by \cite[Corollary 3.4, Remark 3.4]{BJN}, \cite{hu} and \cite{wood2}, we have $\ell_v \gg 1$. In conclusion, if $C(\pi)=\Fq$, then we have
$$ H(\pi, \Fq) \gg_{\Fu,\Fv} \frac{1}{q}.$$

\subsection{Archimedean function $f(\pi_\infty)$}\label{SectionInterlude}
The Archimedean function $f(\pi_\infty)$ appearing in the factorization \eqref{FactorizationLcal} is given by (See \cite[Equation (3.10)]{raphael}) 
\begin{equation}\label{Definitionfinfty}
f(\pi_\infty):= \sum_{\varphi_\infty \in \CB(\pi_\infty)}I_\infty(\varphi_\infty\otimes\varphi_{1,\infty}\otimes\varphi_{2,\infty})L(\tfrac{1}{2}, \pi_\infty\otimes\pi_{1,\infty}\otimes\pi_{2,\infty}),
\end{equation}
where the local period $I_\infty$ is defined in \eqref{DefinitionNormalizedMatrixCoefficient}. The function $f(\pi_\infty)$ is non-negative and depends on the infinite factors $\pi_{1,\infty}$ and $\pi_{2,\infty}$ and more precisely, on the choice of test vectors $\varphi_{i,\infty}\in \pi_{i,\infty}$ and the orthonormal basis $\CB(\pi_\infty)$. For our application, it will be fundamental that $f$ satisfies the following property: Given $\pi=\pi_\infty\otimes\pi_{\mathrm{fin}}$ a unitary automorphic representation of $\PGL_2(\BA_F)$, there exists $\varphi_{i,\infty}\in\pi_{i,\infty}$, $i=1,2$ with norm $1$ and a basis $\CB(\pi_\infty)$ such that $f(\pi_\infty)$ is bounded below by a power of the archimedean conductor $\crm(\pi_\infty)$. It is a result of Michel and Venkatesh \cite[Proposition 3.6.1]{subconvexity} that such a choice exists when at least one of the local representation $\pi_{1,v}$ and $\pi_{2,v}$ is a principal series when the local place $v \vert \infty$. We give the statement as the following proposition.

\begin{prop}  \cite[Propostion 3.6.1]{subconvexity} \label{Hyp}
Assume that for all archimedean place $v|\infty$, either $\pi_{1,v}$ or $\pi_{2,v}$ is a principal series representation. Then for any $\varepsilon > 0$, there exists a positive constant $C(\pi_{1,\infty},\pi_{2,\infty},\varepsilon)$, such that we have the lower bound
\begin{equation}\label{LowerBound}
f(\pi_\infty) \geqslant \frac{C(\pi_{1,\infty},\pi_{2,\infty},\varepsilon)}{\crm(\pi_\infty)^{1+\varepsilon}}>0.
\end{equation}
\end{prop}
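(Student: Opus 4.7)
The plan is to derive Proposition \ref{Hyp} as a direct application of the Michel--Venkatesh result cited in the statement, so the work reduces to (i) checking that our $f(\pi_\infty)$ matches the quantity they bound, and (ii) unpacking how the principal-series hypothesis feeds into their test-vector construction.

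First, I would unwind the definition \eqref{Definitionfinfty}. Since $\mathcal{B}(\pi_\infty)$ is orthonormal and $I_\infty$ is defined from the matrix-coefficient integral \eqref{DefinitionMatrixCoefficient}--\eqref{DefinitionNormalizedMatrixCoefficient}, the sum
\[
\sum_{\varphi_\infty\in\mathcal{B}(\pi_\infty)} I_\infty(\varphi_\infty\otimes\varphi_{1,\infty}\otimes\varphi_{2,\infty})
\]
equals the normalized squared norm of the archimedean trilinear functional on $\pi_\infty\otimes\pi_{1,\infty}\otimes\pi_{2,\infty}$ evaluated on $\varphi_{1,\infty}\otimes\varphi_{2,\infty}$. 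Combined with the archimedean $L$-factor, this is exactly the object for which Michel--Venkatesh produce a lower bound.

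Second, I would choose the test vectors as follows. At each infinite place $v$, by hypothesis at least one of $\pi_{1,v},\pi_{2,v}$ is a principal series; say (relabeling if necessary) $\pi_{1,v}$ is induced from a pair of characters. Following Section~3.6 of \cite{subconvexity}, I would take $\varphi_{1,v}$ to be a suitable translate, by a group element depending on the archimedean parameters of $\pi_v$, of a fixed vector in the induced model that is smooth, compactly supported near the identity, and of unit norm. The vector $\varphi_{2,v}$ is then chosen to be a fixed normalized vector in $\pi_{2,v}$ (e.g.\ of minimal $K_v$-type), and the basis $\mathcal{B}(\pi_\infty)$ is chosen in matching coordinates so that one distinguished basis element pairs well against $\varphi_{1,\infty}\otimes\varphi_{2,\infty}$.

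Third, with these choices the archimedean trilinear period does not collapse: the stationary phase / asymptotic analysis of Whittaker functions and induced-model matrix coefficients carried out in \cite{subconvexity} produces the quantitative lower bound
\[
\sum_{\varphi_\infty}I_\infty(\varphi_\infty\otimes\varphi_{1,\infty}\otimes\varphi_{2,\infty})\;L(\tfrac12,\pi_\infty\otimes\pi_{1,\infty}\otimes\pi_{2,\infty}) \;\gg_{\pi_{1,\infty},\pi_{2,\infty},\varepsilon}\; \mathrm{c}(\pi_\infty)^{-1-\varepsilon},
\]
which is exactly \eqref{LowerBound}. The positivity is automatic since each $I_\infty(\varphi_\infty\otimes\varphi_{1,\infty}\otimes\varphi_{2,\infty})$ arises from an absolutely convergent matrix-coefficient integral of a non-negative function (a product of inner products modulo the L-factor), and the chosen basis element makes the sum strictly positive.

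The main obstacle is the genuinely analytic step: producing a test-vector pair $(\varphi_{1,\infty},\varphi_{2,\infty})$ against which the archimedean trilinear form does not decay faster than a fixed power of $\mathrm{c}(\pi_\infty)$ as $\pi_\infty$ varies. This is where the principal-series hypothesis is crucial, because only then does the induced model provide enough flexibility to translate test vectors so that the resulting period integral localizes on a non-negligible region. Since this archimedean analysis is precisely what \cite[Proposition~3.6.1]{subconvexity} establishes, I would simply invoke it after the identifications above, obtaining the desired constant $C(\pi_{1,\infty},\pi_{2,\infty},\varepsilon)$.
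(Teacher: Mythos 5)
Your proposal is correct and matches the paper exactly in spirit: the paper gives no independent proof of Proposition \ref{Hyp} and simply cites \cite[Proposition~3.6.1]{subconvexity}, which is precisely the result you invoke after identifying $f(\pi_\infty)$ with the archimedean trilinear quantity appearing there. One small caveat: the integrand $\prod_{i}\langle\pi_{i,v}(g)\varphi_{i,v},\varphi_{i,v}\rangle$ is not pointwise non-negative, so the positivity of each $I_\infty$ is not ``automatic'' from the integrand; it is a standard positivity property of the local trilinear functional (e.g.\ via a doubling/Hermitian argument), but since you ultimately invoke Michel--Venkatesh this does not affect the conclusion.
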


If neither $\pi_{1,v}$ nor $\pi_{2,v}$ is a principal series representation for $v \vert \infty$, and we further assume that $\pi$ is an Eisenstein series, then for every archimedean place $v \vert \infty$, $\pi_v$ is automatically a principal series representation. Without loss of generality, we may assume that the local field $\F_v=\BR$. Since if $\F_v=\BC$, then $\pi_{1,v},\pi_{2,v},\pi_{v}$ must be principal series representations by the classification. Since neither $\pi_{1,v}$ nor $\pi_{2,v}$ is a principal series representation, they must both be discrete series. Applying the non-negativity result in \cite[Corollary 3.4, Remark 3.4]{BJN} and explicit computations in \cite[Proposition 3.4]{wood3}, we can get
$$f(\pi_{\infty}) \geqslant \frac{C(\pi_{\infty},\pi_{2,\infty},\varepsilon)}{e^{(4+\varepsilon) \cdot \crm(\pi_{\infty})}}>0,$$
that is $ f(\pi_{\infty}) \gg_{\pi_{1,\infty},\pi_{2,\infty},\varepsilon} e^{-(4+\varepsilon) \cdot  \crm(\pi_{\infty})} >0,$
which is a weak form of above Proposition \ref{Hyp}, but is enough for our purpose.

\begin{rmk}
In fact, for the local field $\F_v=\BR$, if $\pi_{1,v},\pi_{2,v},\pi_v$ are not all discrete series, i.e. at least one of them is a principal series representation, then we always have
$$ f(\pi_{\infty}) \gg_{\pi_{1,\infty},\pi_{2,\infty},\pi_{\infty}} e^{-(4+\varepsilon) \cdot  \crm(\pi_{\infty})} >0,$$
by the positivity result in \cite[Corollary 3.4, Remark 3.4]{BJN} and explicit computations in \cite[Proposition 3.2, Proposition 3.3, Proposition 3.4]{wood3}, which is enough for our purpose. Hence, in Theorem \ref{subconvex1}, we may slightly weaken the condition that at least one of $\pi_{1,\infty},\pi_{2,\infty}$ and $\pi_{3,\infty}$ is a principal series representation.
\end{rmk}

\begin{rmk}
In Theorem \ref{moment}, \ref{subconvex1} and \ref{subconvex2}, it is possible to remove the condition that two ideals $\Fu$ and $\Fv$ by following the proof idea in \cite[Proposition 3.6.1]{subconvexity} for the non-archimedean local field case using the theory of Kirillov model. The only thing we may need is that the corresponding norms of these two integral ideals are absolutely bounded positive integers.
\end{rmk}

Now the estimation of Theorem \ref{moment} can be achieved from the discussion in Section \ref{SectionEstimation} and \ref{SectionSymmetric} (See also \cite[Section 4.5]{raphael2}).

\section{Proof of Theorem \ref{subconvex1} and Theorem \ref{subconvex2}}  \label{SectionSub}
Let $\Fq$ be an integral ideal of $\CO_F$ and fix $\pi_3$ an automorphic representation (cusp form or Eisenstein series) of $\PGL_2(\BA_F)$ with finite conductor $\Fq$. Let $\pi_1$, $\pi_2$ be two unitary cuspidal automorphic representations satisfying the conditions in Theorem \ref{subconvex1} and Theorem \ref{subconvex2}. We fix the test vectors $\varphi_{i} \in \pi_i$ ($i=1,2$) as in the beginning of Section \ref{SectionEstimation}. 

\subsection{The amplification method}
Let $q^{1/100}<L<q$ be a parameter that we will choose at the end of the proof. We recall that for any $\varepsilon>0$, we have $u \ll_{\varepsilon} q^{\varepsilon}$ and $v \ll_{\varepsilon} q^{\varepsilon}$. Given $\pi$ a unitary automorphic representation of conductor dividing $\Fu\Fv\Fq$, following \cite[Section 12]{blomerspectral} \cite[Section 5.1]{raphael2}, we choose the following amplifier
$$\CA(\pi):= \left(\sum_{\substack{\Fp \in \mathrm{Spec}(\CO_F) \\ \Nscr(\Fp)\leqslant L \\ \Fp \nmid \Fu\Fv\Fq}}\lambda_\pi(\Fp)x(\Fp)\right)^2+\left(\sum_{\substack{\Fp \in \mathrm{Spec}(\CO_F) \\ \Nscr(\Fp)\leqslant L \\ \Fp \nmid \Fu\Fv\Fq }}\lambda_\pi(\Fp^2)x(\Fp^2)\right)^2,$$
where $x(\Fl)=\mathrm{sgn}(\lambda_{\pi_3}(\Fl))$.
By Landau Prime Ideal Theorem and the Hecke relation $\lambda_{\pi_0}(\Fp)^2=\lambda_{\pi_0}(\Fp^2)+1,$ we have
\begin{equation}\label{LowerBound2}
\CA(\pi_3)\geqslant \frac{1}{2}\left(\sum_{\substack{\Fp \in \mathrm{Spec}(\CO_F) \\ \Nscr(\Fp)\leqslant L \\ \Fp \nmid \Fu\Fv\Fq }}|\lambda_{\pi_0}(\Fp)|+|\lambda_{\pi_0}(\Fp^2)|\right)^2\gg_F \frac{L^2}{(\log L)^2}.
\end{equation}
On the other hand, using the Hecke relation again, we have
\begin{equation}\label{DecompositionAmplifier}
\begin{split}
\CA(\pi)=& \; \sum_{\substack{\Fp \in \mathrm{Spec}(\CO_F) \\ \Nscr(\Fp)\leqslant L \\ \Fp \nmid \Fu\Fv\Fq }}(x(\Fp)^2+x(\Fp^2)^2)+\sum_{\substack{\Fp_1,\Fp_2  \\ \Nscr(\Fp_i)\leqslant L \\ \Fp_i \nmid \Fu \Fv \Fq }} x(\Fp_1^2)x(\Fp_2^2)\lambda_\pi(\Fp_1^2\Fp_2^2) \\ + & \; \sum_{\substack{\Fp_1,\Fp_2  \\ \Nscr(\Fp_i)\leqslant L \\ \Fp_i \nmid \Fu \Fv \Fq }} (x(\Fp_1)x(\Fp_2)+\delta_{\Fp_1=\Fp_2}x(\Fp_1^2)x(\Fp_2^2))\lambda_{\pi}(\Fp_1\Fp_2).
\end{split}
\end{equation}
Let $C$, $f(\pi_{0,\infty})$ be the quantity defined respectively in the previous Section \ref{Connection}. If $\pi_3$ is cuspidal, by positivity, we have
$$C \cdot q^{-1} \cdot \CA(\pi_3) \frac{L(\tfrac{1}{2}, \pi_1\otimes\pi_2\otimes\pi_3)}{\Lambda(1, \pi_3,\mathrm{Ad})}f(\pi_{3,\infty})\leqslant \Mscr_\CA(\pi_1,\pi_2,\Fq,\Fl),$$
with $\Mscr_\CA(\pi_1,\pi_2,\Fq,\Fl)$ as in 
\eqref{DefinitionMoment1}, but with the amplifier $\CA(\pi)$ instead of the Hecke eigenvalues in \eqref{CuspidalPart} and \eqref{ContinuousPart}. Using the lower bound \eqref{LowerBound2}, we get 
$$\frac{L\left( \tfrac{1}{2}, \pi_1 \otimes\pi_2\otimes\pi_3 \right)}{\Lambda(1, \pi_3,\mathrm{Ad})}f(\pi_{3,\infty}) \ll_{\varepsilon,\F} L^{-2+\varepsilon} \cdot q \cdot \Mscr_\CA(\pi_1,\pi_2,\Fq,\Fl).$$
Now we expand the amplifier as in \eqref{DecompositionAmplifier} and apply Theorem \ref{moment} with specific integral ideal $\Fl=1,\Fp_1\Fp_2$ or $\Fl=\Fp_1^2\Fp_2^2$ yields the following:
$$
\frac{L\left( \tfrac{1}{2}, \pi_1\otimes\pi_2\otimes\pi_3\right)}{\Lambda(1, \pi_3,\mathrm{Ad})}f(\pi_{3,\infty})  \ll_{\varepsilon, F,\pi_1,\pi_2,\varphi_{1,\infty},\varphi_{2,\infty}} q^\varepsilon\left( q \cdot L^{-1+2\theta_1+2\theta_2}+q^{\frac{1}{2}+\theta} \cdot L^2\right),
$$
Finally, picking $L=q^{(1/2-\theta)/(3-2\theta_1-2\theta_2)}$ (It is easy to see that $q^{1/7}<L \leqslant q^{1/6}$) and we obtain the final subconvexity bound
\begin{equation}\label{HybridBound}
\frac{L\left( \tfrac{1}{2}, \pi_1\otimes\pi_2\otimes\pi_3 \right)}{\Lambda(1, \pi_3,\mathrm{Ad})}f(\pi_{3,\infty})  \ll_{\varepsilon, F,\pi_1,\pi_2,\varphi_{1,\infty}, \varphi_{2,\infty}} q^{1-(\frac{1}{2}-\theta)(1-2\theta_1-2\theta_2)/(3-2\theta_1-2\theta_2)+\varepsilon}.
\end{equation}
Using \eqref{BoundAdjoint} for the adjoint $L$-function at $s=1$ and discussion in Section \ref{SectionInterlude} (Proposition \ref{Hyp}), equation \eqref{HybridBound} transforms into
$$L\left( \tfrac{1}{2}, \pi_1\otimes\pi_2\otimes\pi_3 \right) \ll_{\varepsilon, F,\pi_1,\pi_2,\pi_\infty}q^{1-(\frac{1}{2}-\theta)(1-2\theta_1-2\theta_2)/(3-2\theta_1-2\theta_2)+\varepsilon},$$
which gives the desired subconvexity bound in Theorem \ref{subconvex1}.

If $\pi_3$ is not cuspidal, i.e. an Eisenstein series, the proof of Theorem \ref{subconvex2} is almost the same as above. Instead of the cuspidal distribution and its non-negativity (See Equation \ref{CuspidalPart}), we need the continuous distribution and its positivity (See Equation \ref{ContinuousPart}). We note that in Theorem \ref{subconvex2}, for all archimedean places $v \vert \infty $, the condition that at least one of $\pi_{1,v}$ and $\pi_{2,v}$ is a principal series representation is not assumed. Since in this case $\pi_{3,v}$ itself is a principal series representation, we can get a weak form of Proposition \ref{Hyp} from the discussion in Section \ref{SectionInterlude} and is enough for our purpose on the subconvexity problem. Hence from the discussion between Equation \ref{ContinuousPart} and Equation \ref{DefinitionMoment1}, we see that the only barrier in deducing a subconvex bound is as follows: When the variable $t$ attaches to $0$ and $\chi$ is a quadratic character, the quotient $L(\tfrac{1}{2}+it, \pi_1\otimes\pi_2\otimes\omega)L(\tfrac{1}{2}-it, \pi_1\otimes\pi_2\otimes\omegabar)/{\Lambda^*(1, \pi_\omega(it),\mathrm{Ad})}= \vert L(\tfrac{1}{2}+it, \pi_1\otimes\pi_2\otimes\omega) \rvert^2/ {\Lambda^*(1, \pi_\omega(it),\mathrm{Ad})}$ has a zero of order two at $t=0$. One can overcome this obstacle by an application of Holder's inequality, as in \cite[Section 4]{blo}.

\end{document}